\theoremstyle{plain}
\newtheorem{Th}{Theorem}[section]
\newtheorem{Lemma}[Th]{Lemma}
\newtheorem{Cor}[Th]{Corollary}
\newtheorem{Prop}[Th]{Proposition}
 \theoremstyle{definition}
\newtheorem{Def}[Th]{Definition}
\newtheorem{?}[Th]{Problem}
\begin{document}

\title{ON THE  DISJOINT WEAK BANACH-SAKS OPERATORS}

\author{M. Berka$^{(1)}$}
\author{ O. Aboutafail$^{(1)}$}
\author{ J. H'michane$^{(2)}$}

\address{Address (1): Lab. Sciences et Ing\'{e}nierie. ENSA, Universit\'{e} Ibn Tofail 1400 K\'{e}nitra.}
\email{moulayothman.aboutafail@uit.ac.ma, mohamed.berka@uit.ac.ma}

\address{Address (2): Universit\'{e} Moulay Ismail, Facult\'{e} des Sciences, D\'{e}partement de Math\'{e}matiques, B.P. 11201 Zitoune, Mekn\`{e}s, Morocco.}
\email{hm1982jad@gmail.com}

 \subjclass[2020]{47B60, 47B65, 46B42}

 \keywords{Disjoint weak Banach-Saks property, order continuous Banach lattice,
Schur property, almost Banach-Saks operator, weak Banach-Saks operator.}

\begin{abstract} We introduce and study a new class of operators that we call disjoint weak
Banach-Saks operators. We establish some characterizations of this class of operators by different types of convergence (norm convergence, unbounded order convergence, unbounded norm convergence and  unbounded absolute weak convergence) as well as by the positive weakly null sequences. Consequently, we give a new characterization of the disjoint weak Banach-Saks property by the positive disjoint weakly null sequences. Furthermore, we study the relationship between this class and other classes of operators.
\end{abstract}

\maketitle

\section{Introduction}
In  \cite{DOT}, Y.Deng, M.O'Brien and V.G.Troitsky introduced the disjoint weak Banach-Saks property (abb. DWBSP)  in the Banach lattice. Recall that a Banach lattice $E$ has the disjoint weak Banach-Saks property if, every disjoint weakly null  sequence in $E$ has a subsequence whose Ces\`{a}ro sequence is norm convergent in $E$, as examples of such Banach lattices we have $\ell_1$ and $L_p(c_0)$ (see \cite{DOT}).

In this paper we introduce the so called disjoint weak Banach-Saks operator from a Banach lattice $E$ into a Banach space $X$. Our definition is based on the disjoint weak Banach-Saks property. Mainly, in the Proposition \ref{caract_typeconv} we establish some characterizations of this class of operators by different types of convergence, and in the Proposition \ref{cor02} we give a characterization of this class of operators by positive weakly null sequences. Consequently, we give a new characterization of the disjoint weak Banach-Saks property. Also, we give a generalization of Proposition  6.9 \cite{GTX} and of Propositions 6.15 \cite{GTX} . Furthermore, we study the relationship between this class of operators  and that of weak Banach-Saks operators (resp; almost Banach-Saks operators, order weakly compact operators and weakly compact operators).
\section{Preliminaries and Notations}
To state our results, we need to fix some notation and recall some definitions. Let $E$
be a vector lattice. In a Riesz space, two elements $x$ and $y$ are said to be disjoint (in
symbols $x\perp y$) whenever $|x|\land |y| = 0$ holds. In a Banach lattice every order bounded disjoint sequence converges weakly to zero \cite{AB1}. For each $x, y \in E$ with $x \leq y$, the set $[x, y] = \{z \in E: x \leq z \leq y\}$ is called an order interval. A subset of $E$ is said to be order bounded if it is included in
some order interval. Recall that a nonzero element $x$ of a vector lattice $E$ is discrete
if the order ideal generated by $x$ equals the subspace generated by $x$. The vector
lattice $E$ is discrete, if it admits a complete disjoint system of discrete elements.
A Banach lattice is a Banach space $(E,||.||)$ such that $E$ is a vector lattice and
its norm satisfies the following property: for each $x, y\in E$ such that $|x| \leq |y|$, we
have $\|x\| \leq \|y\|$. If $E$ is a Banach lattice, its topological dual $E'$, endowed with the
dual norm, is also a Banach lattice.  A Banach lattice $E$ is order continuous, if for each generalized sequence $(x_{\alpha})$ such that $ x_{\alpha} \downarrow 0$ in $E$, the sequence  $(x_{\alpha})$ converges to $0$ for the norm $\|.\|$ where the notation $ x_{\alpha} \downarrow 0$  means that the sequence $(x_{\alpha})$ is decreasing, its infimum exists and $\inf(x_{\alpha}) = 0$.

 An order continuous Banach lattice $E$ is said to have the subsequence splitting property if for any norm bounded sequence $(x_n)$  there exist a subsequence $(x_{n_k})$ of $(x_n)$ and two sequences $(y_k)$ and $(z_k)$ such that $x_{n_k}=y_k+z_k$, $(y_k)$ is almost order bounded, $(z_k)$ is pairwise disjoint and $y_k \perp z_k$ for all $k$ (see \cite{GTX}).

 A vector lattice $E$ is said to be $\sigma-$laterally complete, if the supremum of every
disjoint sequence of $E^+$ exists in $E$.

 A Banach space is said to have the Schur property, whenever every weakly convergent sequence is norm convergent.

Recall from \cite{AB1} page 345 that a Banach lattice is said to have weakly sequentially continuous lattice operations whenever $x_n \overset{w}{\longrightarrow} 0$
implies  $|x_n| \overset{w}{\longrightarrow} 0$. Every$AM-$space has this property. Also,
any Banach lattice with the Schur property has weakly sequentially continuous lattice operations.

A net $(x_{\alpha})$ of a vector lattice $E$ is said to be uo-converge (abb. uo-convergent) to $x$ if $(|x_{\alpha}-x| \land u)$ converges in order to zero  for every $u\in E^+$; we write $x_{\alpha}\overset{uo}{\longrightarrow} x$. We mention that order convergence implies uo-convergence and they coincide for order bounded nets. We note that every disjoint net is uo-null (see \cite{GTX}).

 A net $(x_{\alpha})$ of a vector lattice $E$  is said to be an unbounded norm convergent (abb. un-convergent) to $x$ if $(|x_{\alpha}-x| \land u)$ converges to zero  for every $u\in E^+$; we write $x_{\alpha}\overset{un}{\longrightarrow} x$. The example 2.6 of \cite{DOT} shows that a disjoint sequence need not be un-null.

 A net $(x_{\alpha})$ of a vector lattice $E$  is said to be unbounded absolute weakly convergent (abb. uaw-convergent) to $x$ if $(|x_{\alpha}-x| \land u)$ converges to zero weakly for every $u\in E^+$; we write $x_{\alpha}\overset{uaw}{\longrightarrow} x$. Every absolute weakly convergent net is uaw-convergent. But the converse is not true in generale. The absolute weakly convergence coincides with uaw-convergence  for the order bounded nets (see \cite{OZ}). We note that every disjoint net is uaw-null (\cite[Lemma 2]{OZ}).

A Banach space $X$ has the weak Banach-Saks property if every weakly null sequence $(x_n)$ in $X$ has a subsequence $(x_{n_k})$  whose Ces\`{a}ro sequence $(\frac{1}{m}\sum^m_{k=1}x_{n_k})$ is norm convergent to zero.

 A subset $A$ of a Banach lattice $E$ is an \textbf{almost order bounded }if for any $\varepsilon > 0$ ther exists $u\in E^+$ such that $A\subset [-u,u]+\varepsilon B_E$. We know from \cite{GX2} that $A\subset [-u,u]+\varepsilon B_E$ iff $\sup_{x\in A}\|(|x|-u)^+\|\leq \varepsilon$ iff $\sup_{x\in A}\||x|-|u|\land |x|\|\leq \varepsilon$. Note that every norm convergent sequence  is almost order bounded.

We will use the term operator $T : E\longrightarrow F$ between two Banach lattices to mean
a bounded linear mapping. It is positive if $T (x) \geq 0$ in $F$ whenever $x \geq 0$ in $E$. The
operator $T$ is regular if $T = T_1-T_2$ where $T_1$ and $T_2$ are positive operators from $E$
into $F$. Note that each positive linear mapping on a Banach lattice is continuous.
If an operator  $T : E\longrightarrow F$  between two Banach lattices is positive, then its adjoint $T' : F'\longrightarrow E'$ is likewise positive, where $T'$ is defined by $T'(f)(x)=f(T(x))$ for each $f\in F'$ and for each $x\in E$.

  An operator $T:E \longrightarrow F$ between two Riesz spaces is said to preserve disjointness whenever $x\perp y$ in $E$ implies $Tx\perp Ty$  in $F$ (\cite[page 149]{AB1}).

A positive linear mapping $S$ is  disjointness preserving if, and only if, $S$ is a Riesz homomorphism (see \cite{Pagter}).

An operator $T:E\longrightarrow Y$  is called M-weakly compact, if $(T(x_n))$ is norm-null for
every bounded disjoint sequence $(x_n)$ in $E$ (see \cite{AB1}).

 An operator $T$ from a Banach lattice $E$ into a Banach space $X$ is said to be an almost disjoint  Banach-Saks, if for every bounded disjoint  sequence $(x_n)$ of $E$, $(T(x_n))$  has a Ces\`{a}ro convergent subsequence in $X$\cite{HHS}.

An operator $T: E\longrightarrow Y$ from a Banach lattice to a Banach space is said to be order weakly compact if, and on if, for every order bounded disjoint sequence $(x_n)$ of $E$, we have $\|T(x_n)\| \longrightarrow 0$ (\cite[Theorem 5.57 (Dodds)]{AB1}).

 An operator  $T$ is weakly compact if, and only if, for every norm bounded sequence $(x_n)$ of $X$ the sequence $(T(x_n))$ has a weakly convergent subsequence in $Y$ (see \cite{AB1}).

In the rest of this paper $X$, $Y$ will denote Banach spaces, and $E$, $F$ will denote Banach lattices.
\section{Main results}
We start by the following definition.
\begin{Def}\label{definition} An operator $T:E \longrightarrow Y$  is called disjoint weak Banach-Saks (abb; DWBS), if for each disjoint weakly null  sequence $(x_n)$ of $E$, the sequence $(T(x_n))$ has a subsequence whose Ces\`{a}ro sequence is norm convergent in $Y$.
\end{Def}
It is clear that $E$ has the disjoint weak Banach-Saks property if, and only if, the identity operator $Id_E$ of $E$ is disjoint weak Banach-Saks. And we observe that this class of operators contains thats of M-weakly compact operators.

We have the following characterisation of the positive disjoint weak Banach-Saks operator.
\begin{Th}\label{caract}
 Let $T:E \longrightarrow F$ be an operator. The following assertions are equivalent:
\begin{enumerate}
\item[(1)] $T$ is a positive disjoint weak Banach-Saks  operator.
\item[(2)] For each disjoint weakly null sequence $(x_n)\subset E^+$, the sequence $(T(x_n))$  has a subsequence whose Ces\`{a}ro sequence is norm convergent in $F$.
\end{enumerate}
\end{Th}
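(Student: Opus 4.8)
The implication $(1)\Rightarrow(2)$ is immediate, since a positive disjoint weakly null sequence is in particular a disjoint weakly null sequence, so the defining property of a disjoint weak Banach-Saks operator applies to it verbatim. All the work is in $(2)\Rightarrow(1)$. Here I keep in mind that the theorem concerns positive operators, so $T\ge0$ is a standing hypothesis, and that the target $F$ is a Banach lattice, so that $|Tx|\le T|x|$ for every $x\in E$ and the norm of $F$ is monotone. The plan is to reduce an arbitrary disjoint weakly null sequence to a \emph{positive} one by passing to moduli, and then to dominate the Ces\`{a}ro means of $(Tx_n)$ by those of $(T|x_n|)$.

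So let $(x_n)$ be an arbitrary disjoint weakly null sequence in $E$. The crucial point, which I expect to be the main obstacle, is that the modulus sequence $(|x_n|)$ is again weakly null; it is automatically positive and disjoint, since $|x_n|\wedge|x_m|=0$ for $n\ne m$. Equivalently I must show $f(|x_n|)\to0$ for every $0\le f\in E'$. First observe that $\{x_n^+,x_n^-\}$ is a pairwise disjoint family: for $n\ne m$ one has $x_n^+\wedge x_m^-\le|x_n|\wedge|x_m|=0$, and $x_n^+\wedge x_n^-=0$ always. Arguing by contradiction, suppose $f(|x_n|)\not\to0$; since $f(|x_n|)=f(x_n^+)+f(x_n^-)$, after passing to a subsequence I may assume $f(x_n^+)\ge\varepsilon>0$ for all $n$ (the case $f(x_n^-)\ge\varepsilon$ is symmetric). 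Let $B$ be the order ideal generated by $\{x_n^+:n\ge1\}$ and define $g$ on $E^+$ by $g(x)=\sup\{f(y):y\in B,\ 0\le y\le x\}$; a Riesz decomposition argument gives additivity, so $g$ extends to a functional with $0\le g\le f$. By construction $g(x_n^+)=f(x_n^+)\ge\varepsilon$, while $g(x_m^-)=0$ since $x_m^-\in B^{d}$, so any $0\le y\le x_m^-$ in $B$ lies in $B\cap B^{d}=\{0\}$. Hence $g(x_n)=g(x_n^+)-g(x_n^-)\ge\varepsilon$ for all $n$, contradicting $x_n\overset{w}{\longrightarrow}0$. Therefore $(|x_n|)$ is weakly null.

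With this lemma in hand the conclusion is short. The sequence $(|x_n|)$ is a positive disjoint weakly null sequence, so by $(2)$ there is a subsequence $(|x_{n_k}|)$ whose Ces\`{a}ro means $c_m=\frac1m\sum_{k=1}^m T|x_{n_k}|$ converge in norm; since $T|x_{n_k}|\overset{w}{\longrightarrow}0$, these means are weakly null, so their norm limit is $0$, i.e. $\|c_m\|\to0$. Using positivity of $T$ and monotonicity of the norm of $F$,
\[
\Big\|\frac1m\sum_{k=1}^m T x_{n_k}\Big\|=\Big\|\,\Big|\frac1m\sum_{k=1}^m Tx_{n_k}\Big|\,\Big\|\le\Big\|\frac1m\sum_{k=1}^m |Tx_{n_k}|\Big\|\le\Big\|\frac1m\sum_{k=1}^m T|x_{n_k}|\Big\|=\|c_m\|\longrightarrow0 .
\]
Hence $(Tx_{n_k})$ has norm convergent Ces\`{a}ro means, which shows that $T$ is disjoint weak Banach-Saks and establishes $(1)$. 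Note that this route never requires extracting subsequences for the positive and negative parts separately (which would be problematic, since convergence of Ces\`{a}ro means is not inherited by subsequences): the single application of $(2)$ to $(|x_n|)$, combined with the domination $|Tx|\le T|x|$, does everything at once.
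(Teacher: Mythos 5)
Your proof is correct and follows essentially the same route as the paper's: pass to the moduli $(|x_n|)$, apply (2) to get a subsequence with norm convergent Ces\`{a}ro means, and dominate the Ces\`{a}ro means of $(Tx_{n_k})$ via $|Tx|\le T|x|$ and monotonicity of the norm. The only differences are that you prove directly (via the band-component functional $g$) the fact the paper simply cites from Wnuk's Remark~1 --- that $(|x_n|)$ is again weakly null --- and that you explicitly justify that the Ces\`{a}ro limit is $0$, a small point the paper asserts without comment.
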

\begin{proof}
$(1)\Longrightarrow (2)$ Evident.\\
$(2)\Longrightarrow (1)$ Let $(x_n)$ be a disjoint weakly null sequence of $E$.  It follows from Remark 1 \cite{Wnuk1} that  $(|x_n|)$ is a disjoint sequence of $E^+$ such that $|x_n|\longrightarrow 0$ for the topologie $\sigma(E,E')$. From the assertion $(2)$, there exists $(|x_{n_k}|)$ a subsequence of $(|x_n|)$ such that $(T(|x_{n_k}|))$ is Ces\`{a}ro convergent. Now, by the inequality: $$|T(x_n)|\leq T(|x_n|)  \text{ for each n },$$ we obtain that $$|\frac{1}{m}\sum^m_{k=1}T(x_{n_k})|\leq \frac{1}{m}\sum^m_{k=1}|T(x_{n_k})|\leq \frac{1}{m}\sum^m_{k=1}T(|x_{n_k}|)$$
and since $\|\frac{1}{m}\sum^m_{k=1}T(|x_{n_k}|)\|\longrightarrow 0$, then $\|\frac{1}{m}\sum^m_{k=1}T(x_{n_k})\|\longrightarrow 0$. So, $(T(x_n))$ has a Ces\`{a}ro convergent subsequence in $F$ and hence $T$ is a positive disjoint weak Banach-Saks operator.
\end{proof}
As consequence of the last Theorem, we have the following characterization of the disjoint weak Banach-Saks property.
\begin{Cor}\label{caract00}
 The following assertions are equivalent:
\begin{enumerate}
\item[(1)] $E$ has the disjoint weak Banach-Saks property.
\item[(2)] Every disjoint weakly null sequence $(x_n)\subset E^+$  has a subsequence whose Ces\`{a}ro sequence is norm convergent.
\end{enumerate}
\end{Cor}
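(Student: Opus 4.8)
The plan is to derive Corollary~\ref{caract00} directly from Theorem~\ref{caract} by specializing to the identity operator. Recall that the excerpt already observes that $E$ has the disjoint weak Banach-Saks property if, and only if, $\mathrm{Id}_E$ is a disjoint weak Banach-Saks operator; moreover $\mathrm{Id}_E$ is obviously positive. Thus the two assertions of the corollary are precisely the two assertions of Theorem~\ref{caract} read with $F=E$ and $T=\mathrm{Id}_E$, and the equivalence should follow by a short translation rather than any fresh argument.

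Concretely, I would argue as follows. For the implication $(1)\Longrightarrow(2)$, assume $E$ has the DWBSP; then $\mathrm{Id}_E$ is a positive DWBS operator, so by the implication $(1)\Longrightarrow(2)$ of Theorem~\ref{caract} applied to $T=\mathrm{Id}_E$, every disjoint weakly null sequence $(x_n)\subset E^+$ has a subsequence whose Ces\`{a}ro sequence $\bigl(\tfrac{1}{m}\sum_{k=1}^m \mathrm{Id}_E(x_{n_k})\bigr)=\bigl(\tfrac{1}{m}\sum_{k=1}^m x_{n_k}\bigr)$ is norm convergent, which is exactly statement~(2) of the corollary. For the converse $(2)\Longrightarrow(1)$, assume statement~(2); this says precisely that $\mathrm{Id}_E$ satisfies condition~(2) of Theorem~\ref{caract}, so by the implication $(2)\Longrightarrow(1)$ of that theorem $\mathrm{Id}_E$ is a positive DWBS operator, and hence $E$ has the DWBSP.

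The only point that needs care is verifying that the abstract operator-level statements of the theorem specialize to the stated sequence-level statements when $T=\mathrm{Id}_E$, namely that $\mathrm{Id}_E(x_n)=x_n$ makes the Ces\`{a}ro sequence of $(T(x_{n_k}))$ coincide with the Ces\`{a}ro sequence of $(x_{n_k})$ itself. This is immediate, so I do not expect any genuine obstacle: the corollary is a pure reformulation and the proof is essentially a one-line invocation of Theorem~\ref{caract} together with the already-noted equivalence between the DWBSP of $E$ and the DWBS property of its identity operator.
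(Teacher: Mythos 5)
Your proposal is correct and matches the paper's intent exactly: the paper states this corollary as an immediate consequence of Theorem~\ref{caract} applied to $T=\mathrm{Id}_E$ (using the observation that $E$ has the DWBSP iff $\mathrm{Id}_E$ is DWBS), which is precisely your argument.
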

If $E$ is order continuous, we can obtain the following characterizations of positive disjoint weak Banach-Saks  operator.
\begin{Prop}\label{caract0}
 Let $T:E \longrightarrow F$ be an operator  such that $E$ is order continuous. The following assertions are equivalent:
\begin{enumerate}
\item[(1)] $T$ is a positive disjoint weak Banach-Saks  operator.
\item[(2)] For each  weakly null and uo-null sequence $(x_n)\subset E^+$, the sequence $(T(x_n))$  has a subsequence whose Ces\`{a}ro sequence is norm convergent in $F$.
\item[(3)] For each  weakly null sequence $(x_n)\subset E^+$, the sequence $(T(x_n))$  has a subsequence whose Ces\`{a}ro sequence is norm convergent in $F$.
\end{enumerate}
\end{Prop}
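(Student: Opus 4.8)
The plan is to establish the cycle $(3)\Rightarrow(2)\Rightarrow(1)\Rightarrow(3)$, in which only the last implication carries real content. The implication $(3)\Rightarrow(2)$ is immediate, since a weakly null and uo-null positive sequence is in particular a weakly null positive sequence. For $(2)\Rightarrow(1)$ I would use the fact, recorded in the preliminaries, that every disjoint net is uo-null: thus any disjoint weakly null sequence $(x_n)\subset E^{+}$ is simultaneously weakly null and uo-null, so $(2)$ applies and $(T(x_n))$ has a Ces\`{a}ro convergent subsequence. As $T\geq 0$, this is precisely assertion $(2)$ of Theorem \ref{caract}, which yields that $T$ is a positive disjoint weak Banach-Saks operator.

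Everything therefore reduces to $(1)\Rightarrow(3)$. Let $(x_n)\subset E^{+}$ be weakly null, hence norm bounded. Since $E$ is order continuous it enjoys the subsequence splitting property \cite{GTX}, so after passing to a subsequence I may write $x_{n_k}=y_k+z_k$ with $(y_k)$ almost order bounded, $(z_k)$ pairwise disjoint and $y_k\perp z_k$. Because $x_{n_k}\geq 0$ and disjointness gives $x_{n_k}=|y_k|+|z_k|$, comparison with $x_{n_k}=y_k+z_k$ forces $y_k,z_k\geq 0$, and in particular $0\leq y_k\leq x_{n_k}$ and $0\leq z_k\leq x_{n_k}$. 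A small but essential observation is that order domination propagates weak nullity: if $0\leq a_k\leq b_k$ and $(b_k)$ is weakly null, then for $\phi\in E'$ the decomposition $\phi=\phi^{+}-\phi^{-}$ gives $0\leq\phi^{\pm}(a_k)\leq\phi^{\pm}(b_k)\to 0$, hence $\phi(a_k)\to 0$. Applying this with $b_k=x_{n_k}$ shows that both $(y_k)$ and $(z_k)$ are weakly null.

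It remains to handle the two parts. The disjoint part $(z_k)$ is a disjoint weakly null sequence, so the very definition of a disjoint weak Banach-Saks operator (assertion $(1)$) provides a subsequence $(z_{k_j})$ whose Ces\`{a}ro sequence is norm convergent in $F$. For the almost order bounded part I would invoke the key lemma: in an order continuous Banach lattice, a positive, weakly null, almost order bounded sequence is norm null. This is exactly where order continuity is indispensable; indeed, since an order bounded disjoint sequence is weakly null (preliminaries), such a statement is equivalent to the norm nullity of order bounded disjoint sequences, that is, to order continuity itself, and the direction we need is the standard consequence of order continuity (see \cite{AB1}). Granting it, $\|y_k\|\to 0$, whence $\|T(y_{k_j})\|\leq\|T\|\,\|y_{k_j}\|\to 0$ and the Ces\`{a}ro means $\frac{1}{m}\sum_{j=1}^{m}T(y_{k_j})$ tend to $0$ in norm. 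Adding the two contributions, the Ces\`{a}ro means of $T(x_{n_{k_j}})=T(y_{k_j})+T(z_{k_j})$ converge in norm, which is assertion $(3)$.

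The main obstacle is precisely this last lemma. For the intermediate class in $(2)$ it would be far easier: a uo-null order bounded sequence is order null (take $u$ to be an order bound in the definition of uo-convergence), hence norm null by order continuity, so an almost order bounded uo-null sequence is automatically norm null, and one needs nothing beyond order continuity applied to order intervals. Passing to the full weakly null class in $(3)$ is what forces the genuinely order-continuity-specific input that rules out order bounded weakly null sequences which are not norm null (as the standard unit vectors show in a lattice that is not order continuous). The remaining steps are routine bookkeeping with subsequences.
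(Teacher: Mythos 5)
Your implications $(3)\Rightarrow(2)$ and $(2)\Rightarrow(1)$ are correct and match the paper. The gap is in $(1)\Rightarrow(3)$, and it is fatal to that route: the claim that order continuity of $E$ already gives the subsequence splitting property is false. The splitting property is strictly stronger (by Weis's theorem an order continuous Banach lattice has it iff it does not contain the $\ell_\infty^n$ uniformly on disjoint vectors). Already $c_0$ fails it: for $x_n=\sum_{k=1}^{n}e_k$ no subsequence splits, since almost order bounded subsets of $c_0$ are relatively compact while a disjoint sequence must tend to $0$ coordinatewise; the lattice $L_p([0,1];c_0)$ fails it too. The paper itself signals this: in Theorem \ref{dwbs-wbs} the subsequence splitting property is listed as a hypothesis \emph{separate} from order continuity, and if order continuity implied it, your argument (which is essentially the proof of Theorem \ref{dwbs-wbs}(1) specialized to positive sequences) would make every disjoint weak Banach--Saks operator on an order continuous domain weak Banach--Saks, contradicting the example $Id_{L_p([0,1];c_0)}$ discussed just before that theorem.

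A secondary problem is your ``key lemma'' (a positive, weakly null, almost order bounded sequence in an order continuous lattice is norm null). It is true, but your justification --- that it is ``equivalent to order continuity'' --- only establishes the easy converse direction; the direction you need requires precisely the machinery the paper's proof is built on, namely that in an order continuous lattice a positive weakly null sequence has a uo-null subsequence (Propositions 4.5 and 4.7 of \cite{GTX}), after which a uo-null sequence dominated by $u$ is order null and hence norm null. The paper avoids the splitting entirely: for $(1)\Rightarrow(2)$ it applies Lemma 6.7 of \cite{GTX} to replace the positive uo-null sequence, up to a norm-null perturbation, by a disjoint sequence which is still weakly null (by the argument of Theorem 3.2 of \cite{DOT}); for $(2)\Rightarrow(3)$ it uses the uo-null subsequence result just quoted. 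You should replace your splitting argument by these two steps.
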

\begin{proof}

$(2)\Longrightarrow (1)$ It  follows from the fact that every disjoint sequence is uo-null.

$(1)\Longrightarrow (2)$ Let $(x_n)$ be a weakly null and uo-null sequence of $ E^+$. Since $E$ is order continuous, then by  Lemma 6.7 \cite{GTX} there exist a subsequence $(x_{n_k})$ of $(x_n)$ and a disjoint sequence $(d_k)$ of $E$ such that $\|x_{n_k}-d_k\| \longrightarrow 0$.  Since $(x_n)$ is a weakly null sequence of $E$, then it follows from the proof of Theorem 3.2 \cite{DOT} that $(d_k)$ is a weakly null sequence of $E$ and hence every subsequence of $(d_k)$ is  disjoint and weakly null. The assertion $(1)$ yields that  $(T(d_k))$  has a subsequence whose Ces\`{a}ro sequence is norm convergent in $F$.

$(3)\Longrightarrow (2)$ Obvious.

$(2)\Longrightarrow (3)$ Let $(x_n)$ be a  weakly null sequence of $E^+$. Since $E$ is order continuous,   then it follows from  Proposition 4.5 \cite{GTX} and  Proposition 4.7 of \cite{GTX} that $x_{n_k} \overset{uo}{\longrightarrow}0$ for some subsequence $(x_{n_k})$. The assertion $(2)$ yields  $(T(x_{n_k}))$   has a subsequence whose Ces\`{a}ro sequence is norm convergent in $F$.
\end{proof}
\begin{Prop}\label{compose1}
We have the following assertions:
\begin{enumerate}
\item If $T:E \longrightarrow X$ is a disjoint weak Banach-Saks operator, then for each operator $S:X\longrightarrow Y$ the composed operator $S\circ T$ is  disjoint weak Banach-Saks.
\item If $T:F \longrightarrow Y$ is a disjoint weak Banach-Saks operator and $S:E\longrightarrow F$ is a disjointness preserving operator, then the composed operator $T \circ S$ is disjoint weak Banach-Saks.
\end{enumerate}
\end{Prop}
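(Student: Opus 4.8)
The plan is to prove both parts directly from Definition \ref{definition}, using only continuity of the operators together with the behaviour of disjointness and weak nullity under the relevant maps. For part (1), I would start with an arbitrary disjoint weakly null sequence $(x_n)$ of $E$ and invoke the hypothesis that $T$ is disjoint weak Banach-Saks: this yields a subsequence $(x_{n_k})$ such that the Ces\`{a}ro means $\frac{1}{m}\sum_{k=1}^m T(x_{n_k})$ converge in norm to some $y \in X$. Since $S$ is an operator it is linear and norm-to-norm continuous, so that $S\big(\frac{1}{m}\sum_{k=1}^m T(x_{n_k})\big)=\frac{1}{m}\sum_{k=1}^m (S\circ T)(x_{n_k})$, and the estimate $\|\frac{1}{m}\sum_{k=1}^m (S\circ T)(x_{n_k}) - S(y)\|\leq \|S\|\,\|\frac{1}{m}\sum_{k=1}^m T(x_{n_k}) - y\|$ shows at once that this Ces\`{a}ro sequence converges in norm to $S(y)$. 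Hence $(S\circ T)(x_n)$ admits a Ces\`{a}ro convergent subsequence in $Y$, so $S\circ T$ is disjoint weak Banach-Saks.

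For part (2), I would again fix an arbitrary disjoint weakly null sequence $(x_n)$ of $E$ and study the image sequence $(S(x_n))$ in $F$. The two properties to verify are that $(S(x_n))$ is disjoint and that it is weakly null. Disjointness is precisely where the hypothesis on $S$ is used: from $x_n\perp x_m$ for $n\neq m$ and the disjointness-preserving property of $S$ we obtain $S(x_n)\perp S(x_m)$, so $(S(x_n))$ is a disjoint sequence of $F$. Weak nullity comes from the fact that every bounded linear operator is weak-to-weak continuous, since for each $f\in F'$ one has $f\circ S\in E'$; thus $x_n\overset{w}{\longrightarrow}0$ forces $S(x_n)\overset{w}{\longrightarrow}0$. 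Therefore $(S(x_n))$ is a disjoint weakly null sequence of $F$, and applying the hypothesis that $T$ is disjoint weak Banach-Saks to it produces a subsequence $(S(x_{n_k}))$ whose Ces\`{a}ro sequence $\frac{1}{m}\sum_{k=1}^m T(S(x_{n_k}))=\frac{1}{m}\sum_{k=1}^m (T\circ S)(x_{n_k})$ is norm convergent in $Y$. Consequently $T\circ S$ is disjoint weak Banach-Saks.

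Neither part presents a serious obstacle; both reduce to the observation that the auxiliary operator preserves the defining structure of the test sequences. The only step needing a little care is in part (2), where one must check separately that disjointness is transmitted (through the disjointness-preserving hypothesis on $S$) and that weak nullity is transmitted (through the mere boundedness of $S$). It is worth emphasizing that no further lattice compatibility of $S$ is required: $S$ need not be positive nor a Riesz homomorphism, only disjointness preserving, which is exactly what makes the image sequence admissible for the hypothesis on $T$.
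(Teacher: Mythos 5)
Your proof is correct and follows essentially the same route as the paper: continuity of $S$ transfers Ces\`{a}ro convergence in part (1), and in part (2) the disjointness-preserving hypothesis plus weak-to-weak continuity make $(S(x_n))$ an admissible test sequence for $T$. If anything, your version is slightly more careful than the paper's, which tacitly assumes the Ces\`{a}ro limit is $0$ and does not spell out why $(S(x_n))$ is weakly null.
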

\begin{proof}
\begin{enumerate}
\item Let $(x_n)$ be a disjoint weakly null sequence of $E$. Since $T$ is disjoint weakly Banach-Saks operator, then there exists $(x_{n_k})$ a subsequence of $(x_n)$ such that the Ces\`{a}ro means of $(Tx_{n_k})$ is convergent to $0$. Since $$\|\frac{1}{m}\sum_{k=1}^mS \circ Tx_{n_k}\|\leq \|S\|\|\frac{1}{m}\sum_{k=1}^mTx_{n_k}\|,$$ then the Ces\`{a}ro sequence of $(SoT(x_{n_k}))$ is convergent to $0$. Hence, $SoT$ is a disjoint weak Banach-Saks operator.
\item Let $(x_n)$ be a  disjoint weakly null sequence of $E$. Since $S$ is a disjointness preserving operator, then $(S(x_n))$ is a disjoint sequence and hence $(S(x_n))$ is a disjoint weakly null sequence in $F$. As $T$ is a disjoint weak Banach-Saks operator, then $(T\circ S(x_n))$ is a Ces\`{a}ro convergent subsequence in $Y$. Hence $T \circ S$ is a disjoint weak Banach-Saks operator.
\end{enumerate}
\end{proof}

As consequences of the above proposition, we have the following results:
\begin{Cor}\label{identity}
The following statements are equivalent:
\begin{enumerate}
\item For each  $Y$, every  operator $T:E \longrightarrow Y$ is disjoint weak Banach-Saks.
\item $E$ has the disjoint weak Banach-Saks property.
\end{enumerate}
\end{Cor}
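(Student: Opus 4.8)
The plan is to exploit the elementary observation recorded just after Definition \ref{definition}, namely that $E$ has the disjoint weak Banach-Saks property if and only if the identity operator $Id_E$ of $E$ is disjoint weak Banach-Saks. With this dictionary in hand, both implications collapse to a single application of the composition stability established in Proposition \ref{compose1}, so the argument is purely formal once that proposition is available.

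For the implication $(1)\Longrightarrow(2)$, I would specialize the hypothesis to the target space $Y=E$ and the operator $T=Id_E$. Statement $(1)$ then guarantees that $Id_E:E\longrightarrow E$ is disjoint weak Banach-Saks, and by the observation above this is precisely the assertion that $E$ has the disjoint weak Banach-Saks property.

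For the converse $(2)\Longrightarrow(1)$, I would start from the fact that the property in $(2)$ makes $Id_E$ a disjoint weak Banach-Saks operator. Given an arbitrary Banach space $Y$ and an arbitrary operator $T:E\longrightarrow Y$, I would factor $T=T\circ Id_E$ and invoke Proposition \ref{compose1}(1), taking the disjoint weak Banach-Saks operator $Id_E:E\longrightarrow E$ as the first (inner) factor and $T$ in the role of the operator $S$. The proposition then yields that $T=T\circ Id_E$ is disjoint weak Banach-Saks, which is exactly $(1)$.

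I expect no genuine obstacle: the entire content is carried by Proposition \ref{compose1}, and the only point requiring care is to match the factorization $T=S\circ Id_E$ to the precise hypotheses of that proposition, that is, to ensure the disjoint weak Banach-Saks factor occupies the domain side of the composition (here $Id_E$, whose domain is $E$) rather than the range side, so that part (1) of the proposition rather than part (2) is the relevant one.
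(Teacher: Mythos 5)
Your proposal is correct and matches the paper's intent exactly: the corollary is stated there as a direct consequence of Proposition \ref{compose1}, with the implication $(2)\Longrightarrow(1)$ obtained by factoring $T=T\circ Id_E$ through part (1) of that proposition, and $(1)\Longrightarrow(2)$ following by specializing to $T=Id_E$. Nothing further is needed.
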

\begin{Cor}\label{identity2}
The following statements are equivalent:
\begin{enumerate}
\item For each  $E$, every disjointness preserving operator $T:E \longrightarrow F$ is disjoint weak Banach-Saks.
\item $F$ has the disjoint weak Banach-Saks property.
\end{enumerate}
\end{Cor}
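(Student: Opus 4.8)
The plan is to derive both implications directly from Proposition \ref{compose1}(2) together with the observation, recorded just after Definition \ref{definition}, that a Banach lattice has the disjoint weak Banach-Saks property precisely when its identity operator is disjoint weak Banach-Saks. The one elementary fact I need in addition is that the identity $Id_F : F \longrightarrow F$ trivially preserves disjointness, since for $x,y \in F$ the condition $x \perp y$ is literally the condition $Id_F(x) \perp Id_F(y)$. With these ingredients the corollary becomes a matter of inserting, respectively extracting, the identity operator.

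For the implication $(2)\Longrightarrow(1)$, I would assume that $F$ has the disjoint weak Banach-Saks property, so that $Id_F$ is a disjoint weak Banach-Saks operator. Given an arbitrary Banach lattice $E$ and a disjointness preserving operator $T : E \longrightarrow F$, I would apply Proposition \ref{compose1}(2) with the disjoint weak Banach-Saks operator taken to be $Id_F$ and the disjointness preserving operator taken to be $T$. The proposition then yields that the composition $Id_F \circ T = T$ is disjoint weak Banach-Saks, which is exactly assertion $(1)$.

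For the reverse implication $(1)\Longrightarrow(2)$, I would specialize the hypothesis to the case $E = F$. Since $Id_F : F \longrightarrow F$ is disjointness preserving, assertion $(1)$ forces $Id_F$ to be disjoint weak Banach-Saks, and by the observation recalled above this is precisely the statement that $F$ has the disjoint weak Banach-Saks property.

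I do not expect a genuine obstacle here, as the substantive content is already packaged in Proposition \ref{compose1}(2); the corollary merely reformulates it. The only points that require (minimal) attention are verifying that $Id_F$ is disjointness preserving, so that the proposition is applicable in the direction $(2)\Rightarrow(1)$, and remembering to set $E = F$ in the direction $(1)\Rightarrow(2)$ in order to produce the identity operator from the universally quantified hypothesis.
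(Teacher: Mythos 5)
Your proposal is correct and is exactly the argument the paper intends: the corollary is stated as an immediate consequence of Proposition \ref{compose1}(2), with $(2)\Rightarrow(1)$ obtained by composing $T$ with the disjoint weak Banach-Saks operator $Id_F$, and $(1)\Rightarrow(2)$ obtained by taking $E=F$ and $T=Id_F$, which is disjointness preserving. The paper gives no further detail, so your write-up simply makes explicit what it leaves to the reader.
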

In the following result, we  prove that the class of  disjoint weak Banach-Saks operators satisfies the domination problem.
\begin{Th}\label{domination}
Let  $S,T: E\longrightarrow F$ be two operators with $0\leq S\leq T$. If $T$ is a disjoint weak Banach-Saks operator, then $S$ is also a disjoint weak Banach-Saks operator.
\end{Th}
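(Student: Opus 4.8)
The plan is to reduce the problem to positive disjoint weakly null sequences via Theorem \ref{caract} and then exploit the order domination together with the monotonicity of the lattice norm. First I would note that the hypothesis $0\leq S\leq T$ forces both $S$ and $T$ to be positive operators; in particular $T$ is a \emph{positive} disjoint weak Banach-Saks operator, so Theorem \ref{caract} is available for it. Since $S$ is also positive, it will suffice, by the same theorem applied to $S$, to verify condition $(2)$ of Theorem \ref{caract} for $S$, i.e. to produce, for every disjoint weakly null sequence $(x_n)\subset E^+$, a subsequence whose $S$-image has norm convergent Cesàro means.

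So I would start from an arbitrary disjoint weakly null sequence $(x_n)\subset E^+$. Applying Theorem \ref{caract} to $T$ gives a subsequence $(x_{n_k})$ with $\|\frac{1}{m}\sum_{k=1}^m T(x_{n_k})\|\longrightarrow 0$; the limit is necessarily $0$ because $(x_{n_k})$ is weakly null, hence so is its image under the bounded operator $T$, and the Cesàro means of a weakly null sequence remain weakly null, so a norm limit must coincide with the weak limit $0$. This same subsequence is the candidate witness for $S$.

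The decisive step is the order squeeze. For each $k$ we have $x_{n_k}\geq 0$, whence $0\leq S(x_{n_k})\leq T(x_{n_k})$, and averaging yields $0\leq \frac{1}{m}\sum_{k=1}^m S(x_{n_k})\leq \frac{1}{m}\sum_{k=1}^m T(x_{n_k})$ in $F$. By the monotonicity of the lattice norm this gives
$$\left\|\frac{1}{m}\sum_{k=1}^m S(x_{n_k})\right\|\leq \left\|\frac{1}{m}\sum_{k=1}^m T(x_{n_k})\right\|\longrightarrow 0,$$
so the Cesàro means of $(S(x_{n_k}))$ converge to zero in norm. Thus $S$ satisfies condition $(2)$ of Theorem \ref{caract}, and being positive, $S$ is a disjoint weak Banach-Saks operator.

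I do not anticipate a genuine obstacle here: the whole argument hinges on the squeeze, which is only legitimate because we have restricted to \emph{positive} disjoint weakly null sequences, so that the relation $0\leq S\leq T$ can be applied termwise to the $x_{n_k}$. The single subtle point worth flagging is that the squeeze transfers norm convergence from the $T$-means to the $S$-means precisely because the common limit is $0$; this is exactly what the reduction through Theorem \ref{caract} (combined with weak nullity) secures, and it is what makes the domination go through without any further work.
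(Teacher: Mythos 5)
Your proof is correct and follows essentially the same route as the paper: both arguments reduce to positive disjoint weakly null sequences (you by invoking Theorem \ref{caract}, the paper directly via $(|x_n|)$ and the inequality $|S(x_n)|\leq S(|x_n|)\leq T(|x_n|)$) and then apply the domination squeeze $0\leq S\leq T$ together with monotonicity of the lattice norm. Your explicit justification that the Ces\`{a}ro limit must be $0$ (via weak nullity of the images) is a point the paper's proof leaves implicit, but the substance is the same.
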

\begin{proof}
Let  $S,T: E\longrightarrow F$ be two operators such that $0\leq S\leq T: E\longrightarrow F$. We suppose that $T$ is a disjoint weak Banach-Saks operator. Let $(x_n)$ be a disjoint weakly null sequence of $E$, it follows from Remark 1 \cite{Wnuk1} that $(|x_n|)$ is a disjoint weakly null sequence. Since $T$ is disjoint weak Banach-Saks, then there exists $(|x_{n_k}|)$ a subsequence of $(|x_n|)$ such that $(T(|x_{n_k}|))$ is Ces\`{a}ro convergent. On the other hand, we have $0\leq S\leq T$ implies that $|S(x_n)|\leq S(|x_n|)\leq T(|x_n|).$ This shows that $$\|\frac{1}{m}\sum^m_{k=1}S(x_{n_k})\|\leq \|\frac{1}{m}\sum^m_{k=1}|S(x_{n_k})|\|\leq \|\frac{1}{m}\sum^m_{k=1}T|x_{n_k}|\|$$ and hence  $(S(x_{n_k}))$ is Ces\`{a}ro convergent, that is $S$ is a disjoint weak Banach-Saks operator.
\end{proof}
We note that there exist Banach lattices $E$ and $ F$ and an operator  $T:E \longrightarrow F$ which is disjoint weak Banach-Saks and such that its modulus $|T |$ does not exist. We consider from \cite{ah} the operator $T:L_1[0,1] \longrightarrow c_0$  defined by:
$$\begin{array}{clcl}
  T: &L_1[0,1] &\longrightarrow c_0 &\\
      &f &\longmapsto &(\int^1_0f(x) r_1 dx, \int^1_0f(x) r_2 dx, ....)\\
 \end{array}$$
where $(r_n)$ is the sequence of Rademacher functions on $[0, 1]$. It is clear that $T$
is disjoint weak Banach-Saks ($L_1[0, 1]$ has the disjoint weak Banach-Saks property), but from the remark page 361 of \cite{ah} the modulus of $T$ does not exist.

\begin{Prop}\label{module}
Let $T:E \longrightarrow F$ be an order bounded disjointness preserving operator such that $F$ Dedekind-complet. Then, $T$ is disjoint weak Banach-Saks if, and only if, $|T|$ is disjoint weak Banach-Saks.
\end{Prop}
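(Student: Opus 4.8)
The plan is to reduce the equivalence to the classical structure theorem for disjointness preserving operators. Since $T$ is order bounded and $F$ is Dedekind complete, the modulus $|T|$ exists; and since $T$ preserves disjointness, $|T|$ is a positive operator (in fact a Riesz homomorphism) satisfying
$$|T(x)|=|T|(|x|)\qquad\text{for every }x\in E.$$
I would open the argument by recording this identity, since it drives both implications, after which only order manipulations with the lattice norm remain.

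To prove that $T$ is disjoint weak Banach-Saks when $|T|$ is, I would take a disjoint weakly null sequence $(x_n)$ in $E$. By Remark 1 of \cite{Wnuk1}, $(|x_n|)$ is a disjoint weakly null sequence contained in $E^+$, so, $|T|$ being a positive disjoint weak Banach-Saks operator, Theorem \ref{caract} produces a subsequence $(n_k)$ with $\|\frac1m\sum_{k=1}^m|T|(|x_{n_k}|)\|\to 0$. Combining the triangle inequality in $F$ with the displayed identity gives
$$\Big\|\frac1m\sum_{k=1}^m T(x_{n_k})\Big\|\le\Big\|\frac1m\sum_{k=1}^m|T(x_{n_k})|\Big\|=\Big\|\frac1m\sum_{k=1}^m|T|(|x_{n_k}|)\Big\|\longrightarrow 0,$$
so $(T(x_n))$ has a Ces\`{a}ro convergent subsequence and $T$ is disjoint weak Banach-Saks.

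For the converse, $|T|$ is positive, so by Theorem \ref{caract} it is enough to verify condition $(2)$ there on positive disjoint weakly null sequences. Let $(x_n)\subset E^+$ be disjoint and weakly null. Because $T$ preserves disjointness, $(T(x_n))$ is a disjoint sequence in $F$, and because $T$ is disjoint weak Banach-Saks there is a subsequence $(n_k)$ with $\|\frac1m\sum_{k=1}^m T(x_{n_k})\|\to 0$. The decisive observation is that disjointness of $(T(x_{n_k}))$ converts the modulus of a finite sum into the sum of the moduli; hence, using $|T|(x_{n_k})=|T(x_{n_k})|$,
$$\Big\|\frac1m\sum_{k=1}^m|T|(x_{n_k})\Big\|=\Big\|\frac1m\sum_{k=1}^m|T(x_{n_k})|\Big\|=\Big\|\Big|\frac1m\sum_{k=1}^m T(x_{n_k})\Big|\Big\|=\Big\|\frac1m\sum_{k=1}^m T(x_{n_k})\Big\|\longrightarrow 0.$$
Theorem \ref{caract} then yields that $|T|$ is disjoint weak Banach-Saks.

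The only genuine difficulty is the first step, namely justifying the identity $|T(x)|=|T|(|x|)$ and the positivity of $|T|$; this is exactly the classical structure theorem for order bounded disjointness preserving operators into a Dedekind complete lattice (see \cite{AB1}, and compare the Riesz homomorphism description recalled via \cite{Pagter}). Granted that, both directions are short: the forward implication is merely the triangle inequality, while the backward one rests solely on the identity $|\sum_{k}y_k|=\sum_k|y_k|$ valid for pairwise disjoint $y_1,\dots,y_m$, together with the lattice property $\||y|\|=\|y\|$ of the norm.
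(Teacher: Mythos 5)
Your proof is correct and rests on the same key fact as the paper's, namely the identity $|T|(|x|)=|T(x)|=||T|(x)|$ for an order bounded disjointness preserving operator into a Dedekind complete lattice (Boulabiar's theorem). The paper's write-up is slightly more economical: it applies $||T|(y)|=|T(y)|$ directly to $y=\frac{1}{m}\sum_{k=1}^{m}x_{n_k}$, obtaining the exact equality $\|\frac{1}{m}\sum_{k=1}^{m}|T|x_{n_k}\|=\|\frac{1}{m}\sum_{k=1}^{m}Tx_{n_k}\|$ for an arbitrary sequence, which gives both implications at once and makes your separate triangle-inequality argument (forward direction) and disjoint-images argument (converse) unnecessary, though both of those steps are themselves sound.
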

\begin{proof}
Let $T:E \longrightarrow F$ be an order bounded disjointness preserving operator.  By \cite[Theorem 2.2]{Boulabiar}, we have $$|T|(|x|)=|T(x)|=||T|(x)| \ \ \forall x\in E$$ and hence
$$\|\frac{1}{m} \sum^m_{k=1}|T|x_{n_k}\|=\||T|\frac{1}{m} \sum^m_{k=1}x_{n_k}\|=\||T\frac{1}{m} \sum^m_{k=1}x_{n_k}|\|=\|\frac{1}{m} \sum^m_{k=1}Tx_{n_k}\|,$$ as desired.
\end{proof}
\begin{Prop}\label{caract_typeconv}
If $E$ is order continuous, then for every operator $T : E\longrightarrow Y$  the following assertions are equivalent:
\begin{enumerate}
\item $T$ is disjoint weak Banach-Saks.
\item For each  weakly null sequence $(x_n)$ of $E$ such that $x_n \overset{un}{\longrightarrow}0$, the sequence $(T(x_n))$ has a subsequence whose Ces\`{a}ro sequence is norm convergent in $Y$.
\item For each weakly null sequence $(x_n)$ of $E$ such that $x_n \overset{uaw}{\longrightarrow}0$, the sequence $(T(x_n))$ has a subsequence whose Ces\`{a}ro sequence is norm convergent in $Y$.
\item For each weakly null sequence $(x_n)$ of $E$ such that $x_n \overset{uo}{\longrightarrow}0$, the sequence $(T(x_n))$ has a subsequence whose Ces\`{a}ro sequence is norm convergent in $Y$.
\end{enumerate}
\end{Prop}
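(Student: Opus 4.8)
The plan is to establish the four equivalences along the cycle $(1)\Rightarrow(3)\Rightarrow(2)\Rightarrow(4)\Rightarrow(1)$. Three of these are the routine direction and rest entirely on a hierarchy among the unbounded convergences that holds because $E$ is order continuous. First I would record the following chain: if $x_n\overset{uo}{\longrightarrow}0$, then $|x_n|\wedge u\overset{o}{\longrightarrow}0$ for every $u\in E^+$, and order continuity upgrades order convergence to norm convergence, so $x_n\overset{un}{\longrightarrow}0$; and trivially $x_n\overset{un}{\longrightarrow}0$ implies $x_n\overset{uaw}{\longrightarrow}0$, since norm convergence implies weak convergence. Thus, on weakly null sequences, uo-nullity $\Rightarrow$ un-nullity $\Rightarrow$ uaw-nullity.

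With this hierarchy in hand, the easy implications are immediate. For $(3)\Rightarrow(2)$: a weakly null sequence with $x_n\overset{un}{\longrightarrow}0$ also satisfies $x_n\overset{uaw}{\longrightarrow}0$, so $(3)$ applies. For $(2)\Rightarrow(4)$: a weakly null sequence with $x_n\overset{uo}{\longrightarrow}0$ satisfies $x_n\overset{un}{\longrightarrow}0$ by order continuity, so $(2)$ applies (this is precisely the step that uses the hypothesis on $E$). For $(4)\Rightarrow(1)$: a disjoint weakly null sequence is uo-null, since every disjoint net is uo-null, so $(4)$ applies. This closes everything except the passage from $(1)$ up to the strongest condition $(3)$, which is where the real work lies.

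For the main implication $(1)\Rightarrow(3)$, let $(x_n)$ be weakly null with $x_n\overset{uaw}{\longrightarrow}0$; being weakly null it is norm bounded, so I would invoke the subsequence splitting property of order continuous Banach lattices \cite{GTX} to obtain a subsequence with $x_{n_k}=y_k+z_k$, where $(y_k)$ is almost order bounded, $(z_k)$ is pairwise disjoint, and $y_k\perp z_k$. Since disjoint nets are uaw-null \cite[Lemma 2]{OZ} and $(x_{n_k})$ is uaw-null, the difference $y_k=x_{n_k}-z_k$ is uaw-null as well. The crux (see the key lemma below) is that $\|y_k\|\longrightarrow 0$; granting this, $z_k=x_{n_k}-y_k$ is disjoint and weakly null with $\|x_{n_k}-z_k\|\longrightarrow 0$. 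Applying the disjoint weak Banach-Saks property of $T$ to $(z_k)$ yields a subsequence whose Ces\`{a}ro means of $(T z_{k_j})$ converge in norm, and since $\|T y_{k_j}\|\leq\|T\|\,\|y_{k_j}\|\longrightarrow 0$ the Ces\`{a}ro means of $(T x_{n_{k_j}})$ converge too, which is exactly $(3)$.

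The key lemma I must prove is that in an order continuous $E$ an almost order bounded uaw-null sequence $(w_k)$ is norm null. Fixing $\varepsilon>0$, almost order boundedness furnishes $u\in E^+$ with $\||w_k|-|w_k|\wedge u\|\leq\varepsilon$ for all $k$, whence $\|w_k\|\leq\varepsilon+\||w_k|\wedge u\|$. Writing $g_k=|w_k|\wedge u$, the sequence $(g_k)$ is positive, order bounded by $u$, and $g_k\overset{w}{\longrightarrow}0$ by uaw-nullity. Now a positive weakly null sequence in an order continuous lattice has a uo-null subsequence by Proposition 4.5 \cite{GTX} and Proposition 4.7 \cite{GTX}; since uo-convergence coincides with order convergence on order bounded sequences, that subsequence is order null, hence norm null by order continuity, and a routine subsequence argument gives $\|g_k\|\longrightarrow 0$. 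Therefore $\limsup_k\|w_k\|\leq\varepsilon$, and letting $\varepsilon\to 0$ finishes the lemma. I expect this lemma to be the main obstacle: the delicate point is upgrading the merely weak nullity of the order bounded truncations $|w_k|\wedge u$ to norm nullity, which is exactly where order continuity is indispensable and without which the statement fails (for instance a sequence of moving bumps in $C[0,1]$ is almost order bounded and uaw-null but not norm null).
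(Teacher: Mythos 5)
Your cycle $(1)\Rightarrow(3)\Rightarrow(2)\Rightarrow(4)\Rightarrow(1)$ is well chosen, and the three ``downhill'' implications are correct: they rest only on the chain uo-null $\Rightarrow$ un-null $\Rightarrow$ uaw-null for order continuous $E$, together with the fact that disjoint sequences are uo-null. The gap is in the main step $(1)\Rightarrow(3)$, where you invoke ``the subsequence splitting property of order continuous Banach lattices.'' Order continuity does \emph{not} imply the subsequence splitting property: it is an additional hypothesis (the paper itself lists it separately from order continuity among the conditions of Theorem \ref{dwbs-wbs}), and it genuinely fails in general. For instance $c_0$ is order continuous, but the sequence $x_n=\sum_{j=1}^n e_j$ admits no splitting $x_{n_k}=y_k+z_k$ with $(y_k)$ almost order bounded and $(z_k)$ pairwise disjoint and $y_k\perp z_k$: almost order boundedness in $c_0$ forces the $y_k$ to be supported in a fixed finite set of coordinates, and then the $z_k$ all charge a common coordinate, contradicting disjointness. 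So the decomposition your argument starts from is not available under the stated hypotheses.

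The repair is to use the tool the paper uses. Since $E$ is order continuous, a uaw-null sequence is un-null (Theorem 4 of \cite{OZ}), and Theorem 3.2 of \cite{DOT} (equivalently Lemma 6.7 of \cite{GTX}) then directly produces a subsequence $(x_{n_k})$ and a disjoint sequence $(d_k)$ with $\|x_{n_k}-d_k\|\longrightarrow 0$; since $(x_{n_k})$ is weakly null, so is $(d_k)$, and one applies hypothesis $(1)$ to $(d_k)$ and transfers back. This yields exactly what you wanted to extract from the splitting, and it makes your key lemma unnecessary, because the non-disjoint part is already norm null rather than merely almost order bounded and uaw-null; with that substitution your argument collapses to essentially the paper's proof of $(1)\Rightarrow(2)$. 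For what it is worth, the key lemma itself (almost order bounded $+$ uaw-null $\Rightarrow$ norm null in an order continuous lattice) is true and your proof of it via the truncations $|w_k|\wedge u$ and Propositions 4.5 and 4.7 of \cite{GTX} is sound; it is only doing work that the situation does not require.
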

\begin{proof}
$(1) \Longrightarrow (2)$ Let $(x_n)$ be a weakly null sequence of $E$ such that  $x_n \overset{un}{\longrightarrow}0$. It follows from Theorem 3.2 \cite{DOT} that there exist $(x_{n_k})$ a subsequence of $(x_n)$ and a disjoint sequence $(d_k)$ of $E$ such that $\|x_{n_k}- d_k \| \longrightarrow 0$. So,  $\|T(x_{n_k})- T(d_k) \| \longrightarrow 0$  in $Y$. Since $(x_n)$ is a weakly null sequence of $E$, then it follows from the proof of Theorem 3.2 \cite{DOT} that $(d_k)$ is a weakly null sequence of $E$. As $T$ is a disjoint weak Banach-Saks operator, the assumption of $(1)$ yields that  $(T(d_k))$  has a subsequence whose Ces\`{a}ro means are norm convergent in $Y$. Hence, $(T (x_{n_k}))$  has a subsequence whose Ces\`{a}ro sequence is norm convergent in $Y$.

$(2) \Longrightarrow (3)$ Let $(x_n)$ be a weakly null sequence of $E$ such that  $x_n \overset{uaw}{\longrightarrow}0$. Since $E$ is order continuous, then it follows from Theorem 4 \cite{OZ} that $x_n \overset{un}{\longrightarrow}0$. Therefore, the assertion (2) yields that $(T(x_n))$ has a subsequence whose Ces\`{a}ro sequence is norm convergent in $Y$.

$(3) \Longrightarrow (4)$ Let $(x_n)$ be a weakly null sequence of $E$ such that $x_n \overset{uo}{\longrightarrow}0$. Since $E$ is order continuous, then it follows from Propostion 2.5 \cite{DOT} and Theorem 4 \cite{OZ} that $x_n \overset{uaw}{\longrightarrow}0$. Hence, by the assertion (3) we infer that $(T(x_n))$ has a subsequence whose Ces\`{a}ro sequence is norm convergent in $Y$.

$(4) \Longrightarrow (1)$ Let $(x_n)$ be a disjoint weakly null sequence of $E$. By Corollary 3.6 \cite{GTX}, we see that $x_n \overset{uo}{\longrightarrow}0$ and hence it follows from the assertion (4) that $(T(x_n))$ has a Ces\`{a}ro convergent subsequence in $Y$. That is, $T$ is a disjoint weak Banach-Saks operator.
\end{proof}
\begin{Prop}\label{caract_almost ob}
If $F$ is order continuous, then for every  operator $T: E\longrightarrow F$  the following assertions are equivalent.
\begin{enumerate}
\item $T$ is disjoint weak Banach-Saks.
\item For each disjoint weakly null sequence $(x_n)$ of $E$, $(T(x_n))$ has a subsequence whose Ces\`{a}ro sequence is almost order bounded.
\item For each disjoint weakly null sequence $(x_n)$ of $E^+$, $(T(x_n))$ has a subsequence whose Ces\`{a}ro sequence is almost order bounded.
\end{enumerate}
\end{Prop}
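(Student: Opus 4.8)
The plan is to prove the cycle $(1)\Rightarrow(2)\Rightarrow(3)\Rightarrow(1)$, with the last implication carrying essentially all of the content. The implication $(1)\Rightarrow(2)$ is immediate: if $T$ is disjoint weak Banach-Saks, then for a disjoint weakly null $(x_n)$ the sequence $(T(x_n))$ already has a subsequence whose Ces\`{a}ro sequence is norm convergent, and every norm convergent sequence is almost order bounded (recalled in the Preliminaries). The implication $(2)\Rightarrow(3)$ is trivial, since a disjoint weakly null sequence of $E^{+}$ is in particular a disjoint weakly null sequence of $E$.

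For $(3)\Rightarrow(1)$, let $(x_n)$ be a disjoint weakly null sequence of $E$. First I would reduce to the positive case. Writing $x_n=x_n^{+}-x_n^{-}$, both $(x_n^{+})$ and $(x_n^{-})$ are disjoint; and since by Remark 1 of \cite{Wnuk1} $(|x_n|)$ is weakly null, the identities $x_n^{+}=\frac{1}{2}(|x_n|+x_n)$ and $x_n^{-}=\frac{1}{2}(|x_n|-x_n)$ show that $(x_n^{+}),(x_n^{-})\subset E^{+}$ are weakly null. Applying $(3)$ to $(x_n^{+})$, then applying $(3)$ again along the resulting indices to $(x_n^{-})$, I obtain a common subsequence $(x_{n_k})$ for which both Ces\`{a}ro sequences $\frac{1}{m}\sum_{k=1}^{m}T(x_{n_k}^{+})$ and $\frac{1}{m}\sum_{k=1}^{m}T(x_{n_k}^{-})$ are almost order bounded; as $T(x_{n_k})=T(x_{n_k}^{+})-T(x_{n_k}^{-})$ and a difference of norm convergent Ces\`{a}ro sequences is norm convergent, it suffices to treat a disjoint weakly null $(u_n)\subset E^{+}$ and show that the almost order bounded Ces\`{a}ro sequence $v_m=\frac{1}{m}\sum_{k=1}^{m}T(u_{n_k})$ is norm convergent. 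Note that $u_n\overset{w}{\longrightarrow}0$ forces $T(u_n)\overset{w}{\longrightarrow}0$, hence $(v_m)$ is weakly null.

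The heart of the argument, and the step I expect to be the main obstacle, is to upgrade \emph{almost order bounded and weakly null} to \emph{norm convergent} for $(v_m)$, and this is precisely where order continuity of $F$ must be used. The elementary mechanism I would rely on is the following consequence of order continuity: an order bounded disjoint sequence in $F$ is norm null, whence an almost order bounded disjoint sequence in $F$ is norm null (split $|z|=(|z|-u)^{+}+|z|\wedge u$, bounding the first summand by the almost-order-bound and the second by order continuity). The plan is to bring the weakly null, almost order bounded sequence $(v_m)$ into this disjoint setting, which is exactly the weak Banach--Saks--type property of almost order bounded subsets of an order continuous Banach lattice generalized from Propositions 6.9 and 6.15 of \cite{GTX}; passing, if necessary, to a further subsequence produced by this device yields a subsequence of $(T(u_n))$ with norm convergent Ces\`{a}ro sequence, and recombining the $x_n^{+}$ and $x_n^{-}$ contributions shows that $T$ is disjoint weak Banach-Saks.

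The delicate point to get right is that assertion $(3)$ only furnishes almost order boundedness of the Ces\`{a}ro \emph{means} $v_m$, not of the underlying sequence $(T(u_n))$ itself, so one cannot simply apply a weak Banach--Saks property to $(T(u_n))$ directly. I would therefore have to run the disjointification/order-continuity device at the level of the averaged sequence and make sure that the norm convergence so extracted genuinely corresponds to the Ces\`{a}ro sequence of an honest subsequence of $(T(u_n))$, rather than to iterated averages; reconciling the Ces\`{a}ro averaging with the disjointification is the real work of the proof.
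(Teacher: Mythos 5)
There is a genuine gap, and you have in fact located it yourself without closing it: your final paragraph concedes that ``reconciling the Ces\`{a}ro averaging with the disjointification is the real work of the proof'' and then does not carry out that work. The difficulty is real --- an almost order bounded, weakly null sequence in an order continuous Banach lattice need not be norm null (think of the Rademacher functions in $L_2$, which are even order bounded), so no elementary ``upgrade'' of the averaged sequence $(v_m)$ from almost order bounded and weakly null to norm convergent is available, and your proposed detour through disjoint sequences is not a proof. The missing idea, which is how the paper proceeds, is to exploit the fact that \emph{every} subsequence of a disjoint weakly null sequence is again disjoint weakly null, so hypothesis $(2)$ (resp.\ $(3)$) actually yields that every subsequence of $(T(x_n))$ (resp.\ of $(T(x_n^{+}))$) has a further subsequence with almost order bounded Ces\`{a}ro means. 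This is precisely the hypothesis of Lemma 6.3 of \cite{GTX} for the order continuous lattice $F$, which then produces a subsequence $(T(x_{n_k}))$ and a vector $y$ such that the Ces\`{a}ro means of \emph{every} further subsequence converge in norm (and uo) to $y$; weak nullity forces $y=0$. That lemma is the engine of the whole proposition, and it does not appear in your argument.

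The same omission also undermines your ``common subsequence'' step in the reduction to the positive case. Applying $(3)$ to $(x_n^{+})$ gives one subsequence with almost order bounded Ces\`{a}ro means, but almost order boundedness of Ces\`{a}ro means is not stable under passing to further subsequences; so after you refine the indices to handle $(x_n^{-})$, you have no right to assert that the Ces\`{a}ro means of the positive parts along the \emph{new} indices are still almost order bounded. The paper avoids this by first running Lemma 6.3 on $(T(x_n^{+}))$, obtaining a subsequence along which the Ces\`{a}ro means of every further subsequence converge in norm (hence are almost order bounded), and only then extracting the further subsequence for $(T(x_n^{-}))$. Your easy implications $(1)\Rightarrow(2)\Rightarrow(3)$ and the observation that $(x_n^{\pm})$ are disjoint weakly null in $E^{+}$ are correct and match the paper, but the proof as proposed is incomplete at its central step.
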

\begin{proof}
$(1) \Longrightarrow (2) \Longrightarrow (3)$ Are obvious.

$(2) \Longrightarrow (1)$ Let $(x_n)$ be a  disjoint weakly null sequence of $E$. Since any subsequence
$(x_{n_k})$  of $(x_n)$ is also a  disjoint weakly null sequence of $E$, the assertion  $(2)$ yields that $(T(x_{n_k}))$ has a subsequence whose Ces\`{a}ro sequence is almost order bounded. In particular, any subsequence of $(T(x_n))$ has a further subsequence whose Ces\`{a}ro sequence is almost order bounded.  Since $F$ is order continuous, then it follows from  Lemma 6.3 of \cite{GTX} that there exists a subsequence $(T(x_{n_k}))$ of $(T(x_n))$ and a vector $y\in F$ such that the Ces\`{a}ro sequence is of any subsequence of $(T(x_{n_k}))$ converges uo- and in norm to $y=0$.

$(3) \Longrightarrow (2)$  Let $(x_n)$ be a  disjoint weakly null sequence of $E$. It follows from Wnuk  \cite[Remark 1]{Wnuk1} that  $(|x_n|)$ is a disjoint sequence of $E^+$ such that $|x_n|{\overset{w}{\longrightarrow}} 0$. We have $x_n=x_n^+-x_n^-$ and $|x_n|=x_n^++x_n^-$. It is easy  from the inequality $0\leq x_n^+ \leq |x_n|$ (resp,  $0\leq x_n^- \leq |x_n|$), to see that  $(x_n^+)$ (resp, $(x_n^-)$) is a disjoint weakly null sequence of $E^+$. So, every subsequence of $(x_n^+)$ is disjoint weakly null in $E^+$. The assumption $(3)$ yields that every subsequence of $(T(x_n^+))$ has a further subsequence whose Ces\`{a}ro sequence is almost order bounded in $F$. Since $F$ is order continuous, then by  Lemma 6.3 \cite{GTX} there exists a subsequence $(T(x_{\phi(n)}^+))$ of $(T(x_n^+))$ and a vector $y\in F$ such that the Ces\`{a}ro sequence  of any subsequence of $(T(x_{\phi(n)}^+))$ converges uo and in norm to $y=0$, and so the Ces\`{a}ro sequence  of any subsequence of $(T(x_{\phi(n)}^+))$ is almost order bounded in $F$. On the other hand, $(x_{\phi(n)}^-)$ is a disjoint  weakly null sequence of $E^+$. It follows from the assertion $(3)$ that the sequence $(T(x_{\phi(n)}^-))$ has a subsequence $(T(x_{\psi(\phi(n))}^-))$  whose Ces\`{a}ro sequence  is almost order bounded in $F$. As it was shown above, the Ces\`{a}ro sequence  of any subsequence of $(T(x_{\phi(n)}^+))$ is almost order bounded in $F$ and hence the Ces\`{a}ro sequence  of $(T(x_{\psi(\phi(n))}^+))$ is almost order bounded. Therefor, the Ces\`{a}ro sequence of the subsequence  $T(x_{\psi(\phi(n))})=T(x_{\psi(\phi(n))}^+)-T(x_{\psi(\phi(n))}^-)$ of the sequence $(T(x_n))$ is almost order bounded in $F$, where $\phi$ and $\psi$ are increasing mappings from $\mathbb{N}$ to  $\mathbb{N}$.
\end{proof}
Whenever $E$ and $F $ are order continuous, we obtain the following characterizations.
\begin{Prop}\label{cor02}
Let $E$  and $F$ be order continuous. For an operator $T: E\longrightarrow F$, the following assertions are equivalent.
\begin{enumerate}
\item $T$ is  disjoint weak Banach-Saks.
\item For each weakly and uo-null sequence $(x_n)$ of $E^+$, $(T(x_n))$ has a subsequence whose Ces\`{a}ro sequence is almost order bounded in $F$.
\item For each weakly null sequence $(x_n)$ of $E^+$, $(T(x_n))$ has a subsequence whose Ces\`{a}ro sequence is almost order bounded in $F$.
\end{enumerate}
\end{Prop}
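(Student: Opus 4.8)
The plan is to establish the cycle $(1)\Rightarrow(2)\Rightarrow(3)\Rightarrow(1)$, gluing together the two one-sided order-continuity results already available: Proposition \ref{caract_typeconv}, which exploits order continuity of $E$, and Proposition \ref{caract_almost ob}, which exploits order continuity of $F$. For $(1)\Rightarrow(2)$ I would take a weakly null and uo-null sequence $(x_n)\subset E^+$, which is in particular a weakly null, uo-null sequence of $E$. Since $E$ is order continuous, the equivalence $(1)\Leftrightarrow(4)$ of Proposition \ref{caract_typeconv} applies and yields a subsequence of $(T(x_n))$ whose Ces\`{a}ro means converge in norm in $F$. As every norm convergent sequence is almost order bounded (recalled in the preliminaries), that Ces\`{a}ro subsequence is almost order bounded, which is exactly $(2)$.

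For $(2)\Rightarrow(3)$ I would start from an arbitrary weakly null $(x_n)\subset E^+$. The key step is to upgrade weak nullity to uo-nullity along a subsequence: since $E$ is order continuous, Proposition 4.5 and Proposition 4.7 of \cite{GTX} produce a subsequence $(x_{n_k})$ with $x_{n_k}\overset{uo}{\longrightarrow}0$. This subsequence is still weakly null and lies in $E^+$, hence it is a weakly and uo-null sequence of $E^+$, so assertion $(2)$ applies to it and furnishes a further subsequence of $(T(x_{n_k}))$, and therefore of $(T(x_n))$, whose Ces\`{a}ro sequence is almost order bounded in $F$. This gives $(3)$.

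For $(3)\Rightarrow(1)$ I would observe that every disjoint weakly null sequence of $E^+$ is, in particular, a weakly null sequence of $E^+$, so $(3)$ guarantees that its image under $T$ has a subsequence whose Ces\`{a}ro means are almost order bounded. This is precisely assertion $(3)$ of Proposition \ref{caract_almost ob}; since $F$ is order continuous, that proposition then yields that $T$ is disjoint weak Banach-Saks. The step I expect to carry the real content is this last one: passing from mere almost order boundedness of the Ces\`{a}ro means back to the genuine norm-convergence (Banach-Saks) conclusion is not automatic, and it is exactly here that order continuity of $F$ is indispensable, entering through Proposition \ref{caract_almost ob} and ultimately through Lemma 6.3 of \cite{GTX}. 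The only other delicate point is the subsequence extraction in $(2)\Rightarrow(3)$, where order continuity of $E$ is what converts weak nullity into uo-nullity; the remaining manipulations are routine subsequence arguments.
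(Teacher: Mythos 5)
Your proof is correct, and it is organized somewhat differently from the paper's. The middle step $(2)\Rightarrow(3)$ (extracting a uo-null subsequence from a positive weakly null sequence via Propositions 4.5 and 4.7 of \cite{GTX} and then invoking $(2)$) is identical to the paper's. The differences are at the two ends. For $(1)\Rightarrow(2)$ the paper does not pass through Proposition \ref{caract_typeconv}; it reruns the splitting argument from scratch: by Lemma 6.7 of \cite{GTX} a weakly and uo-null positive sequence has a subsequence norm-close to a disjoint sequence $(d_k)$, which is again weakly null by the proof of Theorem 3.2 of \cite{DOT}, and then Proposition \ref{caract_almost ob} and Lemma 6.3 of \cite{GTX} are applied to $(T(d_k))$. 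Your shortcut through the equivalence $(1)\Leftrightarrow(4)$ of Proposition \ref{caract_typeconv}, followed by the remark that norm convergent sequences are almost order bounded, is cleaner and in fact delivers the stronger conclusion that the Ces\`{a}ro means converge in norm. For the return direction, your reduction of $(3)\Rightarrow(1)$ to assertion $(3)$ of Proposition \ref{caract_almost ob} is exactly where order continuity of $F$ (via Lemma 6.3 of \cite{GTX}) carries the weight; note that in the paper's printed proof the implication labels and the proof bodies do not match (the block labelled $(2)\Rightarrow(1)$ actually argues $(1)\Rightarrow(2)$, and the block labelled $(3)\Rightarrow(2)$ argues $(2)\Rightarrow(3)$), so the implication back to $(1)$ is the one the paper leaves implicit -- your version closes the cycle explicitly.
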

\begin{proof}
$(1) \Longrightarrow(2) \Longrightarrow (3)$ Are obvious.

$(2) \Longrightarrow(1)$ Let $(x_n)$ be a weakly null and uo-null sequence of $E^+$. Since $E$ is order continuous, then by  Lemma 6.7 \cite{GTX} there exist a subsequence $(x_{n_k})$ of $(x_n)$ and a disjoint sequence $(d_k)$ of $E$ such that $\|x_{n_k}-d_k\| \longrightarrow 0$.  Since $(x_n)$ is a weakly null sequence of $E$, then it follows from the proof of Theorem 3.2 \cite{DOT} that $(d_k)$ is a weakly null sequence of $E$ and hence every subsequence of $(d_k)$ is  disjoint and weakly null. As $T$ is disjoint weak Banach-Saks, then it follows from the Proposition \ref{caract_almost ob} that every subsequence of $(T(d_k))$  has a further subsequence whose Ces\`{a}ro sequence is almost order bounded in $F$. As $F$ is order continuous, then it follows from Lemma 6.3 \cite{GTX} that $(T(d_k))$ has a subsequence whose Ces\`{a}ro sequence is norm convergent in $F$ and hence $(T (x_{n_k}))$  has a subsequence whose Ces\`{a}ro sequence is almost order bounded in $F$.

$(3) \Longrightarrow(2)$ Let $(x_n)$ be a weakly null  sequence of $E^+$. Since $E$ is order continuous, then it follows from the Proposition 4.5 and the Proposition 4.7 \cite{GTX} that there exists a subsequence $(x_{n_k})$ of $(x_n)$ such that $x_{n_k}\overset{uo}{\longrightarrow}0$. The assertion $(3)$ yields that $(T(x_{n_k}))$  has a subsequence whose Ces\`{a}ro sequence is almost order bounded in $F$.
\end{proof}
As consequence of the Proposition \ref{caract0}, the Proposition \ref{caract_almost ob} and the Proposition \ref{cor02}, we get the Proposition 6.9 \cite{GTX}.
\begin{Cor}(Proposition 6.9 \cite{GTX})\\
Let $E$ be  order continuous. Then, the following assertions are equivalent.
\begin{enumerate}
\item $E$ has the disjoint weak Banach-Saks property.
\item Every disjoint weakly null sequence $(x_n)$ of $E$ has a subsequence whose  Ces\`{a}ro sequence is almost order bounded.
\item Every disjoint weakly null  sequence $(x_n)$ of $E^+$ has a subsequence whose  Ces\`{a}ro sequence is almost order bounded.
\item Every weakly and uo-null sequence $(x_n)$ of $E$ has a subsequence whose Ces\`{a}ro sequence is norm convergent.
\item Every weakly and uo-null sequence $(x_n)$ of $E^+$ has a subsequence whose  Ces\`{a}ro sequence is almost order bounded.
\item Every weakly null sequence $(x_n)$ of $E^+$ has a subsequence whose Ces\`{a}ro sequence is norm convergent.
\item Every weakly null sequence $(x_n)$ of $E^+$ has a subsequence whose Ces\`{a}ro sequence is almost order bounded.
\end{enumerate}
\end{Cor}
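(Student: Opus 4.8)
The plan is to derive every equivalence by specialising the operator-level characterisations already established to the identity operator $Id_E$. The hinge is the observation recorded just after Definition \ref{definition}: $E$ has the disjoint weak Banach-Saks property if and only if $Id_E$ is a disjoint weak Banach-Saks operator. Since $E$ is assumed order continuous, the target lattice $F:=E$ is order continuous as well, and $Id_E$ is positive; hence the hypotheses of Propositions \ref{caract0}, \ref{caract_almost ob}, \ref{cor02} (and of \ref{caract_typeconv}) are all satisfied with $T=Id_E$ and $F=E$. In particular assertion (1) of the Corollary is literally assertion (1) of each of those results read for $Id_E$, so it plays the role of a common pivot, and it suffices to match the remaining conditions to the corresponding clauses.

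First I would apply Proposition \ref{caract_almost ob} to $Id_E$: its conditions (2) and (3) become exactly (2) and (3) of the Corollary, giving $(1)\Longleftrightarrow(2)\Longleftrightarrow(3)$. Next I would apply Proposition \ref{cor02} to $Id_E$: its conditions (2) and (3) are precisely (5) and (7) of the Corollary, giving $(1)\Longleftrightarrow(5)\Longleftrightarrow(7)$. Finally I would apply Proposition \ref{caract0} to $Id_E$: its condition (3) is (6) of the Corollary, yielding $(1)\Longleftrightarrow(6)$. Chaining these three strings through the pivot (1) produces the equivalence of (1), (2), (3), (5), (6), (7) with no further work.

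The one condition not read off verbatim is (4): the three cited propositions phrase their ``weakly and uo-null, norm-convergent Ces\`{a}ro'' clause over the positive cone $E^{+}$, whereas (4) is stated over all of $E$. This is the only genuine step, and I would supply it from Proposition \ref{caract_typeconv}, whose hypothesis (order continuity of $E$) is in force: its assertion (4), applied to $Id_E$, is exactly assertion (4) of the Corollary and is there shown equivalent to $Id_E$ being disjoint weak Banach-Saks, i.e. to (1). If one prefers to stay strictly within the three named propositions, the same equivalence can be re-derived by hand: $(4)\Rightarrow(1)$ uses only that every disjoint weakly null sequence is uo-null (Corollary 3.6 \cite{GTX}), so (4) applies to it; and $(1)\Rightarrow(4)$ repeats the argument of Proposition \ref{caract0}, disjointifying a weakly and uo-null signed sequence via Lemma 6.7 \cite{GTX} into a subsequence $(x_{n_k})$ with $\|x_{n_k}-d_k\|\to 0$ for a disjoint weakly null $(d_k)$, applying the property to $(d_k)$, and transferring norm-convergence of the Ces\`{a}ro means back to $(x_{n_k})$.

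The main obstacle is therefore organisational rather than computational: one must correctly align each of the seven conditions with the right clause of the right proposition, and in particular notice that condition (4), phrased over $E$, is furnished by Proposition \ref{caract_typeconv} and not by the positive-cone clause of Proposition \ref{caract0}. Once this bookkeeping is done, the Corollary collapses to four applications of earlier results to $Id_E$.
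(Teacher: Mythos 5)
Your proposal is correct and follows essentially the same route as the paper, which simply derives the corollary by specialising Propositions \ref{caract0}, \ref{caract_almost ob} and \ref{cor02} to the identity operator $Id_E$. You are in fact slightly more careful than the paper: you correctly notice that condition (4), being stated over all of $E$ rather than $E^{+}$, is not literally a clause of those three propositions and must be supplied by Proposition \ref{caract_typeconv} (or by the hand argument you sketch), a point the paper's one-line attribution glosses over.
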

We denote by $L_{DWBS}(E,Y)$ the spaces of all disjoint weak Banach-Saks operators from $E$ to $Y$.
\begin{Prop}
 $L_{DWBS}(E,Y)$ is a closed subset of the space of all continuous operators from  $E$ to $F$.
\end{Prop}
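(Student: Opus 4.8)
The plan is to verify sequential closedness: if $(T_j)$ lies in $L_{DWBS}(E,Y)$ and $\|T_j - T\| \to 0$ for some continuous operator $T$, then $T$ is again disjoint weak Banach-Saks. I would fix an arbitrary disjoint weakly null sequence $(x_n)$ of $E$; being weakly convergent it is norm bounded, say $\|x_n\| \leq M$. Note first that $(T(x_n))$ is weakly null in $Y$, so any norm-convergent Ces\`{a}ro sequence extracted from $(T(x_n))$ must have limit $0$; the goal is therefore to produce one subsequence $(x_{n_k})$ with $\frac{1}{m}\sum_{k=1}^{m} T(x_{n_k}) \to 0$ in norm. The driving estimate is the splitting $T = T_j + (T-T_j)$, which for any choice of indices gives
\begin{equation*}
\Bigl\| \frac{1}{m}\sum_{k=1}^{m} T(x_{n_k}) \Bigr\| \leq \Bigl\| \frac{1}{m}\sum_{k=1}^{m} T_j(x_{n_k}) \Bigr\| + \|T - T_j\|\,M,
\end{equation*}
where the last term is uniform in $m$ and can be made arbitrarily small by taking $j$ large.

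In view of this estimate the problem reduces to the following: choose indices $j_1 < j_2 < \cdots$ with $\|T - T_{j_i}\|\,M \to 0$, and then build a single subsequence $(y_k)$ of $(x_n)$ that is simultaneously Ces\`{a}ro-null for every $T_{j_i}$, i.e. $\frac{1}{m}\sum_{k=1}^{m} T_{j_i}(y_k) \to 0$ as $m\to\infty$ for each fixed $i$. Granting such a $(y_k)$, the conclusion follows: given $\varepsilon>0$, first pick $i$ with $\|T - T_{j_i}\|\,M < \varepsilon/2$, then pick $m_0$ so that the $T_{j_i}$-Ces\`{a}ro mean stays below $\varepsilon/2$ for $m\geq m_0$, and the displayed inequality yields $\|\frac{1}{m}\sum_{k=1}^{m} T(y_k)\| < \varepsilon$ for $m \geq m_0$.

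To build $(y_k)$ I would exploit that each $T_{j_i}$ is disjoint weak Banach-Saks: from (a tail of) the subsequence available at step $i$, which is still disjoint weakly null, I extract a further subsequence on which $T_{j_i}$ has norm-null Ces\`{a}ro means, and I then append a long, \emph{consecutive} block of its leading terms to $(y_k)$. Thus $(y_k)$ is assembled inductively from successive blocks $B_1, B_2, \ldots$, the block $B_i$ being a genuine initial stretch of a $T_{j_i}$-Ces\`{a}ro-null subsequence. The reason for using an initial stretch rather than scattered terms is that over such a block the $T_{j_i}$-average is an honest Ces\`{a}ro mean and hence small once the block is long enough.

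I expect the main obstacle to be exactly this construction, not the $\varepsilon/2$ split. The naive device of extracting a nested sequence for each $T_{j_i}$ and passing to the diagonal fails here, because norm-Ces\`{a}ro-nullity is \emph{not} inherited by arbitrary subsequences (the diagonal tail is only a subsequence, not a stretch, of each extracted sequence). The fix is the blocked construction above, but it forces a delicate bookkeeping: the block lengths must grow fast enough that each block's own $T_{j_i}$-average is small and, simultaneously, the finitely many leading terms of every block contribute negligibly to all later Ces\`{a}ro means, despite the a priori uncontrolled rate at which each extracted subsequence becomes Ces\`{a}ro-null. Balancing these growth requirements against the rates emerging from each application of the disjoint weak Banach-Saks hypothesis is the real content of the argument.
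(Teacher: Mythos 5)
Your opening reduction --- the bound $\|\frac{1}{m}\sum_{k=1}^{m}T(x_{n_k})\|\leq\|\frac{1}{m}\sum_{k=1}^{m}T_j(x_{n_k})\|+\|T-T_j\|M$ together with the observation that any norm limit of Ces\`{a}ro means must be $0$ because $(T(x_n))$ is weakly null --- is precisely the paper's entire proof. The paper takes a single subsequence $(x_{n_k})$, writes ``$(T_m(x_n))$ has a subsequence $(T_m(x_{n_k}))$ whose Ces\`{a}ro sequence is norm convergent'' as though the same $(x_{n_k})$ served every $T_m$, and then lets $\varepsilon$ vary; it never confronts the fact that the extracted subsequence depends on $m$ (equivalently on $\varepsilon$), so it only produces, for each $\varepsilon>0$, a subsequence whose Ces\`{a}ro means are eventually below $\varepsilon$, not one subsequence whose Ces\`{a}ro means tend to $0$. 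You have therefore correctly isolated the genuine difficulty --- that Ces\`{a}ro-nullity is not inherited by subsequences, so neither reusing one subsequence nor a naive diagonal works --- and on this point your write-up is more honest than the paper's.

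However, your proposal does not close that gap: the blocked construction is only sketched, and as sketched it contains an unresolved circularity. When you append block $B_i$ (an initial stretch of a freshly extracted $T_{j_i}$-Ces\`{a}ro-null subsequence of the available tail), the terms of $B_i$ with $r<R_i$, where $R_i$ is the ``ramp-up'' length after which that extraction's Ces\`{a}ro means drop below $\varepsilon_i$, contribute up to $rCM/(M_{i-1}+r)$ to the running average, where $M_{i-1}$ is the total length of the earlier blocks and $C=\sup_i\|T_{j_i}\|$. Controlling this forces $M_{i-1}$ to dominate $R_i$; but $R_i$ is revealed only by the extraction performed \emph{after} $M_{i-1}$ has been fixed, and reordering the choices (nesting the extractions first, or retroactively lengthening $B_{i-1}$) merely shifts where the uncontrolled quantity appears, since shifting or re-basing a Ces\`{a}ro-null sequence changes its rate. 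So the ``delicate bookkeeping'' you defer is not routine; it is exactly where the proof lives. A clean way to finish is to invoke the Erd\H{o}s--Magidor dichotomy: pass to a subsequence $(x_{n_k})$ such that either every further subsequence of $(T(x_{n_k}))$ has uniformly convergent Ces\`{a}ro means with a common limit (necessarily $0$), or no further subsequence is Ces\`{a}ro convergent (in a quantitatively uniform sense); your $\varepsilon/2$ estimate applied to a $T_{j}$-Ces\`{a}ro-null further subsequence, with $\|T-T_{j}\|M$ small, then rules out the second alternative. Without such an ingredient, both your argument and the paper's are incomplete.
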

\begin{proof}
Let $(T_{m})$ be a sequence of disjoint weak Banach-Saks operators which is convergent to the operator $T$. We will show that $T$ is also a disjoint weak Banach-Saks operator. For this, let $(x_{n})$ be a disjoint  weakly null sequence of $E$, then  $(T_m(x_n))$  has a subsequence $(T_m(x_{n_k}))$  whose Ces\`{a}ro sequence is norm convergent in $Y$. That is, $\lim \frac{1}{n} \sum ^{n}_{k=1}T_m(x_{n_k})=0$. On the other hand,  $(T_{m})$ converges to the operator $T$. So, given any $\varepsilon>0$, there is an $m_{0} \in \mathbb{N}$ such that $\|T_{m}-T\|\leq \frac{\varepsilon}{2M}$ for each $m>m_{0}$, where $M=\sup_{n}\|x_{n} \|$. Let $m>m_{0}$, for sufficiently large $n$ we have $\|\frac{1}{n} \sum ^{n}_{k=1}T_m(x_{n_k})|\ \leq\frac{\varepsilon}{2}$.\\
  Therefore, by the inequality
 \begin{eqnarray*}
   \|T(x_{n})\| &\leq& \|(T-T_{m})(x_{n})\|+\|T_{m}(x_{n})\| \\
     &\leq&  M. \frac{\varepsilon}{2M}+\|T_{m}(x_{n})\|
 \end{eqnarray*}
 we obtain that
  \begin{eqnarray*}
   \|\frac{1}{n} \sum ^{n}_{k=1}T(x_{n_k})\| &\leq&  \frac{\varepsilon}{2}+\|\frac{1}{n} \sum ^{n}_{k=1}T_m(x_{n_k})\| \\
     &\leq&  \frac{\varepsilon}{2}+\frac{\varepsilon}{2}=\varepsilon.
 \end{eqnarray*}
Hence, $\lim \frac{1}{n} \sum ^{n}_{k=1}T(x_{n_k})=0$, that is $T \in L_{DWBS}(E,Y)$, as desired.
\end{proof}
\begin{Prop}\label{svect}
 If $F$ is order continuous, then $L_{DWBS}(E,F)$ is a closed vector subspace of the space of all continuous operators from $E$ to  $F$.
\end{Prop}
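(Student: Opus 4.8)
The plan is as follows. By the preceding Proposition, $L_{DWBS}(E,F)$ is already a norm-closed subset of the space of all continuous operators from $E$ to $F$ (take $Y=F$ there), so it remains only to check that it is a linear subspace. Closure under scalar multiplication is immediate: if $T$ is disjoint weak Banach-Saks and $\lambda$ is a scalar, then for any disjoint weakly null $(x_n)$ I pick a subsequence $(x_{n_k})$ along which the Ces\`{a}ro means of $(Tx_{n_k})$ are norm-null, and then $\|\frac{1}{m}\sum_{k=1}^m \lambda T x_{n_k}\| = |\lambda|\,\|\frac{1}{m}\sum_{k=1}^m T x_{n_k}\| \to 0$, so $\lambda T \in L_{DWBS}(E,F)$. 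The real content is closure under addition.

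First I would isolate the obstacle: if $S$ and $T$ are both disjoint weak Banach-Saks and $(x_n)$ is disjoint weakly null, then $T$ produces a subsequence along which its Ces\`{a}ro means vanish, and $S$ produces another, but Ces\`{a}ro convergence is not inherited by subsequences, so a subsequence good for $S$ need not remain good for $T$. To force a single subsequence that works for both, I would exploit the order continuity of $F$ through Proposition \ref{caract_almost ob} together with Lemma 6.3 of \cite{GTX}, which upgrades ``every subsequence has a convergent further subsequence'' to ``there is a subsequence all of whose further subsequences converge.''

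Concretely, fix $S,T \in L_{DWBS}(E,F)$ and let $(x_n)$ be disjoint weakly null in $E$. Every subsequence of $(x_n)$ is again disjoint weakly null, so by Proposition \ref{caract_almost ob} every subsequence of $(Tx_n)$ has a further subsequence whose Ces\`{a}ro sequence is almost order bounded; since $F$ is order continuous, Lemma 6.3 of \cite{GTX} then yields a subsequence $(x_{n_k})$ such that the Ces\`{a}ro sequence of \emph{every} subsequence of $(Tx_{n_k})$ converges uo and in norm to $0$. Applying the same reasoning to $S$ along the already-chosen subsequence $(x_{n_k})$ (which is still disjoint weakly null), Proposition \ref{caract_almost ob} and Lemma 6.3 of \cite{GTX} give a further subsequence $(x_{n_{k_j}})$ of $(x_{n_k})$ along which the Ces\`{a}ro means of $(Sx_{n_{k_j}})$ are norm-null. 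The decisive point is that $(x_{n_{k_j}})$ is a subsequence of $(x_{n_k})$, so by the first step the Ces\`{a}ro means of $(Tx_{n_{k_j}})$ are \emph{also} norm-null.

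Finally I would combine the two estimates by the triangle inequality,
$$\left\|\frac{1}{m}\sum_{j=1}^m (S+T)x_{n_{k_j}}\right\| \le \left\|\frac{1}{m}\sum_{j=1}^m S x_{n_{k_j}}\right\| + \left\|\frac{1}{m}\sum_{j=1}^m T x_{n_{k_j}}\right\| \longrightarrow 0,$$
so $((S+T)x_n)$ admits a subsequence with norm-null Ces\`{a}ro means and $S+T \in L_{DWBS}(E,F)$. Together with closure under scalars and the closedness from the preceding Proposition, this shows $L_{DWBS}(E,F)$ is a closed vector subspace. The main obstacle is precisely the passage from ``each of $S,T$ has some good subsequence'' to ``one subsequence is good for both,'' and this is exactly where order continuity of $F$ enters, through the selection principle furnished by Lemma 6.3 of \cite{GTX}.
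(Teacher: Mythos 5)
Your proposal is correct and follows essentially the same route as the paper: the paper's proof of this proposition simply refers to the argument of implication $(3)\Longrightarrow(2)$ of Proposition \ref{caract_almost ob}, which is exactly your device of using order continuity of $F$ and Lemma 6.3 of \cite{GTX} to select one subsequence that is ``universally good'' for $T$ and then refining it for $S$. You merely spell out explicitly the reduction to closedness (via the preceding proposition) and to scalar multiples, which the paper leaves implicit.
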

\begin{proof}
The proof is the same as that of the implication $(3) \Longrightarrow(2)$ of Proposition \ref{caract_almost ob}.
 \end{proof}
As consequence of the proposition \ref{svect}, we have the following result.
\begin{Cor}
Let $T: E \longrightarrow F$ be an operator such that $F$ is order continuous.  If the modulus of $T$ exists and is disjoint weak Banach-Saks, then $T$  is disjoint weak Banach-Saks.
\end{Cor}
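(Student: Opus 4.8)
The plan is to combine the Riesz decomposition of $T$ through its modulus with the domination theorem (Theorem \ref{domination}) and the vector-space structure established in Proposition \ref{svect}.

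First I would use the hypothesis that $|T|$ exists to write
\[
T=T^+-T^-,\qquad T^+=\tfrac{1}{2}\bigl(|T|+T\bigr),\quad T^-=\tfrac{1}{2}\bigl(|T|-T\bigr).
\]
Since $|T|$ exists, $T^+$ and $T^-$ are well-defined positive operators from $E$ into $F$ satisfying $0\leq T^+\leq |T|$ and $0\leq T^-\leq |T|$.

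Next, because $|T|$ is assumed to be disjoint weak Banach-Saks and $0\leq T^+\leq |T|$, the domination result of Theorem \ref{domination} gives that $T^+$ is disjoint weak Banach-Saks. Applying the same reasoning to $T^-$, which also satisfies $0\leq T^-\leq |T|$, shows that $T^-$ is disjoint weak Banach-Saks as well.

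Finally, since $F$ is order continuous, Proposition \ref{svect} ensures that $L_{DWBS}(E,F)$ is a vector subspace of the space of continuous operators. Hence $T=T^+-T^-$, being a difference of two disjoint weak Banach-Saks operators, belongs to $L_{DWBS}(E,F)$, so $T$ is disjoint weak Banach-Saks. The only subtlety worth flagging is that neither ingredient suffices in isolation: the domination theorem handles the two positive parts, while the linearity of the class of disjoint weak Banach-Saks operators — which genuinely requires the order continuity of $F$ via Proposition \ref{svect} — is what allows us to recombine them into $T$.
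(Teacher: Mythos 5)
Your proof is correct and follows essentially the route the paper intends: the paper introduces this corollary as a consequence of Proposition \ref{svect} and refers to the proof of the second part of Proposition 2.1 of \cite{ah}, which is exactly the decomposition $T=T^{+}-T^{-}$ with $0\leq T^{\pm}\leq |T|$, the domination theorem, and the vector-space structure of $L_{DWBS}(E,F)$. Nothing to add.
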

\begin{proof}
The proof  is the same as that of the seconde part of  Proposition 2.1 of \cite{ah}.
\end{proof}
\begin{Lemma}\label{uaw-lem}
Let $(x_{\alpha})$ be a net in  $E$ such that $x_{\alpha}\overset{uaw}{\longrightarrow}0$. Then, there exists an increasing sequence of indices $(\alpha_k)$ and a disjoint sequence
$(d_k)$ such that $x_{\alpha_k}-d_k \overset{w}{\longrightarrow}0$.
\end{Lemma}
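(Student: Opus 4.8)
The plan is to model the argument on the proof of Theorem 3.2 of \cite{DOT}, but to carry out every estimate in the weak rather than the norm topology, since the hypothesis $x_\alpha\overset{uaw}{\longrightarrow}0$ supplies only weak control of the overlaps $|x_\alpha|\wedge u$, and this is matched by the purely weak conclusion. The solid core is a truncation decomposition. For a fixed $u\in E^+$ I would set $t_\alpha^{(u)}=((-u)\vee x_\alpha)\wedge u$ and $r_\alpha^{(u)}=x_\alpha-t_\alpha^{(u)}$; a Freudenthal/pointwise computation gives $|t_\alpha^{(u)}|=|x_\alpha|\wedge u$ and $|r_\alpha^{(u)}|=(|x_\alpha|-u)^+$. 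Because $x_\alpha\overset{uaw}{\longrightarrow}0$ means precisely that $|x_\alpha|\wedge u\overset{w}{\longrightarrow}0$ for every $u\in E^+$, and because for every $f\in E'$ one has $|f(t_\alpha^{(u)})|\le |f|(|t_\alpha^{(u)}|)=|f|(|x_\alpha|\wedge u)\to 0$ (using that $|f|\in E'$ is positive), the truncation $t_\alpha^{(u)}$ is weakly null for each fixed $u$. Thus, up to a weakly null error, each $x_\alpha$ agrees with its tail $r_\alpha^{(u)}$, and the whole problem is to turn a suitable family of tails into a genuinely disjoint sequence.

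Next I would run an inductive sliding-hump selection. Choosing $\alpha_1$ arbitrarily and, having chosen $\alpha_1<\dots<\alpha_{k-1}$, I would form the finite (hence existing) supremum $u_{k-1}=|x_{\alpha_1}|\vee\dots\vee|x_{\alpha_{k-1}}|$ and use $|x_\alpha|\wedge u_{k-1}\overset{w}{\longrightarrow}0$ to pick $\alpha_k$ so far out in the net that this overlap is weakly negligible against the finitely many functionals accumulated so far; a diagonal argument over a countable norming family for the separable sublattice generated by the selected terms lets one make the overlaps vanish weakly against every test functional in the limit. The disjoint sequence $(d_k)$ is then extracted from the tails of the selected terms by the standard disjointification (the one used in the proof of Theorem 3.2 \cite{DOT}), the weak smallness of the successive overlaps forcing $x_{\alpha_k}-d_k$ to be controlled by $t_{\alpha_k}^{(u)}$-type terms, so that $x_{\alpha_k}-d_k\overset{w}{\longrightarrow}0$.

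The hard part is exactly this disjointification step: manufacturing a genuinely (not merely asymptotically) disjoint sequence $(d_k)$ in $E$ while keeping the error only weakly — not norm — small. Here the absence of order continuity bites, since band projections and norm-convergent infinite suprema are unavailable, and the residual $(|x_\alpha|-u)^+$ is not disjoint from $u$; the usual ``peak'' disjointification needs the infinite supremum of the remaining moduli to exist, so one must either pass to the Dedekind completion to form those suprema or, preferably, exploit that the conclusion is only weak. I would therefore arrange, one functional at a time via the diagonal extraction, that each $f$ in the countable norming set fails to see the overlaps; the delicate bookkeeping that simultaneously secures exact disjointness of $(d_k)$ and weak vanishing of $x_{\alpha_k}-d_k$ is the only real obstacle, everything else reducing to the routine truncation estimate of the first paragraph.
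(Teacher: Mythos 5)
You have correctly identified the paper's intended route (this lemma is proved in the paper by a one-line reduction to Theorem 3.2 of \cite{DOT}, replacing the norm estimate $\|x_{\alpha_k}\wedge x_{\alpha_i}\|\le 2^{-(k+1)}$ by an estimate against a positive functional), and your truncation identities $|((-u)\vee x_\alpha)\wedge u|=|x_\alpha|\wedge u$ and $|r_\alpha^{(u)}|=(|x_\alpha|-u)^+$ are correct. But the proposal is not a proof: it stops exactly where the lemma's content begins. You never construct the disjoint sequence $(d_k)$ and never establish $x_{\alpha_k}-d_k\overset{w}{\longrightarrow}0$; you explicitly defer ``the delicate bookkeeping that simultaneously secures exact disjointness of $(d_k)$ and weak vanishing of $x_{\alpha_k}-d_k$'' as ``the only real obstacle.'' That obstacle is the entire lemma. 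In the norm-topology argument of \cite{DOT}, the element $d_k$ is obtained as $(|x_{\alpha_k}|-w_k)^+$ (suitably decomposed into positive and negative parts), where $w_k=\sum_{i\ne k}|x_{\alpha_k}|\wedge|x_{\alpha_i}|$; this series converges in $E$ precisely because the overlaps are small \emph{in norm}, and $\|x_{\alpha_k}-d_k\|\le 2\|w_k\|\to 0$ for the same reason. With only weak smallness of the overlaps, $w_k$ need not exist in $E$ at all — you correctly sense this, but sensing the difficulty is not resolving it.

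Moreover, the one concrete device you do propose for the weak estimate — arranging the overlaps to be small against ``a countable norming family for the separable sublattice generated by the selected terms,'' via diagonal extraction — cannot deliver the conclusion. Weak convergence must be tested against \emph{every} functional in $E'$, and convergence to zero against a countable norming set of a separable subspace does not imply weak convergence (the unit vector basis of $\ell^1$ tends to zero against the coordinate functionals, which are norming, yet is not weakly null). So even if the disjointification were granted, your mechanism for proving $x_{\alpha_k}-d_k\overset{w}{\longrightarrow}0$ would fail. In fairness, the paper's own proof is the same terse reduction and does not address either of these points explicitly; but as written your proposal contains a genuine gap at the construction of $(d_k)$ and a step (the countable-norming-family diagonalization) that does not do what you need it to do.
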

\begin{proof}
The proof of this lemma is the same as that of Theorem 3.2 \cite{DOT}. It suffice to replace the norm  in the inequality $\|x_{\alpha_k}\land x_{\alpha_i}\|\leq \frac{1}{2^{k+1}} $ by a positive linear functional in order to replace the  norm convergence by the weak convergence.
\end{proof}
\begin{Prop}\label{uaw-dwbs0}
If  $Y$ has the Schur property, then for every  operator $T : E\longrightarrow Y$ the following statements are equivalent:
\begin{enumerate}
\item $T$ is  disjoint weak Banach-Saks.
\item For each weakly null  sequence $(x_n)$ of $E$ such that $x_n \overset{uaw}{\longrightarrow}0$, $(T(x_n))$ has a subsequence whose Ces\`{a}ro sequence is norm convergent.
\end{enumerate}
\end{Prop}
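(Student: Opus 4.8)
The plan is to deduce $(1)\Longrightarrow(2)$ from Lemma \ref{uaw-lem} together with the Schur property of $Y$, and to obtain $(2)\Longrightarrow(1)$ almost for free from the fact that disjoint sequences are uaw-null.

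First I would treat $(2)\Longrightarrow(1)$. Given a disjoint weakly null sequence $(x_n)$ of $E$, I recall that every disjoint sequence is uaw-null (\cite[Lemma 2]{OZ}), so $x_n\overset{uaw}{\longrightarrow}0$. Hence $(x_n)$ is a weakly null sequence that is also uaw-null, and assertion $(2)$ directly supplies a subsequence of $(T(x_n))$ with norm convergent Ces\`{a}ro means. This is exactly the definition of $T$ being disjoint weak Banach-Saks, so this implication requires no further work.

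The substantive direction is $(1)\Longrightarrow(2)$. Starting from a weakly null sequence $(x_n)$ with $x_n\overset{uaw}{\longrightarrow}0$, I would invoke Lemma \ref{uaw-lem} to produce an increasing sequence of indices $(n_k)$ and a disjoint sequence $(d_k)$ with $x_{n_k}-d_k\overset{w}{\longrightarrow}0$. Since $(x_{n_k})$ is weakly null, $(d_k)$ is a disjoint weakly null sequence, and the disjoint weak Banach-Saks property of $T$ yields a subsequence $(T(d_{k_j}))$ whose Ces\`{a}ro means converge in norm, necessarily to $0$ because $(T(d_k))$ is weakly null. The key maneuver is then to apply $T$ to the weak convergence $x_{n_k}-d_k\overset{w}{\longrightarrow}0$, obtaining $T(x_{n_k})-T(d_k)\overset{w}{\longrightarrow}0$ in $Y$, and to exploit the Schur property of $Y$ to upgrade this to $\|T(x_{n_k})-T(d_k)\|\longrightarrow0$. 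Splitting
\[
\frac{1}{m}\sum_{j=1}^m T(x_{n_{k_j}})=\frac{1}{m}\sum_{j=1}^m T(d_{k_j})+\frac{1}{m}\sum_{j=1}^m\bigl(T(x_{n_{k_j}})-T(d_{k_j})\bigr),
\]
I see that both Ces\`{a}ro means on the right vanish in the limit: the first by the disjoint weak Banach-Saks property and the second because it is the Ces\`{a}ro average of a norm-null sequence. Hence $(T(x_{n_{k_j}}))$ has norm convergent Ces\`{a}ro means, giving $(2)$.

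I expect the crux to be precisely the passage from weak to norm convergence of $T(x_{n_k})-T(d_k)$: this is the only place the Schur hypothesis on $Y$ enters, and it replaces the norm-approximation $\|x_{n_k}-d_k\|\to0$ available in the order continuous setting (Theorem 3.2 of \cite{DOT}, used in Proposition \ref{caract_typeconv}) by the weaker $x_{n_k}-d_k\overset{w}{\longrightarrow}0$ from Lemma \ref{uaw-lem}, which Schur then renders norm-effective after applying $T$. A secondary point to keep in order is the nesting of subsequences: first extract $(n_k),(d_k)$ from Lemma \ref{uaw-lem}, then $(k_j)$ from the disjoint weak Banach-Saks property of $T$, and verify that the norm-nullity of the differences and the convergence of $\frac{1}{m}\sum_{j=1}^m T(d_{k_j})$ hold simultaneously along this final subsequence.
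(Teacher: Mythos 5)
Your proof is correct and follows essentially the same route as the paper's: both directions rest on Lemma \ref{uaw-lem} to produce the disjoint approximating sequence $(d_k)$, the DWBS property applied to $(d_k)$, and the Schur property of $Y$, with $(2)\Longrightarrow(1)$ being immediate from the fact that disjoint sequences are uaw-null. The only (harmless) difference is where Schur enters: you apply it to the individual differences $T(x_{n_k})-T(d_k)$ and then average, while the paper first passes to Ces\`{a}ro means, observes they converge weakly, and applies Schur to the means at the end.
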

\begin{proof}
$(1) \Longrightarrow (2)$ Let $(x_n)$ be a weakly null sequence of $E$ such that  $x_n \overset{uaw}{\longrightarrow}0$. It follows from the Lemma \ref{uaw-lem} that there exist $(x_{n_k})$ a subsequence of $(x_n)$ and a disjoint sequence $(d_k)$ of $E$ such that $x_{n_k}- d_k \overset{w}{\longrightarrow} 0$. As $(x_{n_k})$ is weakly null in $E$, then $(d_k)$ is weakly null in $E$. Since $T$ is a disjoint weak Banach-Saks operator, then  $(T(d_k))$ has a Ces\`{a}ro norm convergent subsequence $(T(d_{\phi(k)}))$ which is also weakly convergent. As $x_{n_{\phi(k)}}- d_{\phi(k)} \overset{w}{\longrightarrow} 0$, then $(T (x_{n_k}))$ has a Ces\`{a}ro weakly convergent subsequence $(T(x_{n_{\phi(k)}}))$ in $Y$ and since $Y$ has the Schur property, then $(T (x_{n_k}))$ has a Ces\`{a}ro  convergent subsequence $(T(x_{n_{\phi(k)}}))$.

$(2) \Longrightarrow (1)$ Let $(x_n)$ be a disjoint weakly null sequence of $E$. It follows from Lemma 2 \cite{OZ} that $x_n \overset{uaw}{\longrightarrow}0$ and hence  $(T(x_n))$ has a Ces\`{a}ro convergent subsequence in $Y$.
\end{proof}
As a consequence of the Proposition \ref{caract_almost ob} and the Proposition \ref{uaw-dwbs0}, we obtain the following result.
\begin{Cor}\label{uaw-dwbs}
If the lattice operations of $E$ are  weakly sequentially continuous and $Y$ has the Schur property, then for every operator $T : E\longrightarrow Y$ the following  statements are equivalent:
\begin{enumerate}
\item $T$ is  disjoint weak Banach-Saks.
\item For each weakly null sequence $(x_n)$ of $E$ such that $x_n \overset{uaw}{\longrightarrow}0$, $(T(x_n))$ has a subsequence whose Ces\`{a}ro sequence is norm convergent.
\item For each weakly null  sequence $(x_n)$ of $E$, $(T(x_n))$ has a subsequence whose Ces\`{a}ro sequence is norm convergent.
\item For each weakly null  sequence $(x_n)$ of $E^+$, $(T(x_n))$ has a subsequence whose Ces\`{a}ro sequence is norm convergent.
\end{enumerate}
\end{Cor}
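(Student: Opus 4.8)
The plan is to prove the chain $(1)\Longleftrightarrow(2)\Longleftrightarrow(3)\Longleftrightarrow(4)$ by reducing the first equivalence to Proposition \ref{uaw-dwbs0} and the remaining ones to short arguments resting on the two standing hypotheses, namely the weak sequential continuity of the lattice operations of $E$ and the Schur property of $Y$. The equivalence $(1)\Longleftrightarrow(2)$ is exactly Proposition \ref{uaw-dwbs0}, whose proof already uses the Schur property of $Y$, so nothing new is required there. Moreover $(3)\Longrightarrow(2)$ and $(3)\Longrightarrow(4)$ are immediate, since in $(2)$ one tests only the uaw-null weakly null sequences and in $(4)$ only the positive ones, both being subfamilies of the weakly null sequences. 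Hence the genuine content is carried by the implications $(2)\Longrightarrow(3)$ and $(4)\Longrightarrow(3)$.

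For $(2)\Longrightarrow(3)$ I would use the weak sequential continuity of the lattice operations to erase the difference between ``weakly null'' and ``weakly null and uaw-null''. If $(x_n)$ is weakly null in $E$, then $|x_n|\overset{w}{\longrightarrow}0$ by hypothesis, that is, $(x_n)$ is absolute weakly null; since every absolute weakly convergent net is uaw-convergent (as recalled in the preliminaries), we obtain $x_n\overset{uaw}{\longrightarrow}0$. Thus every weakly null sequence already satisfies the hypothesis of $(2)$, and $(3)$ follows at once.

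The implication $(4)\Longrightarrow(3)$ is the core of the proof, and I would model it on the decomposition carried out in the step $(3)\Longrightarrow(2)$ of Proposition \ref{caract_almost ob}. Given a weakly null sequence $(x_n)$ in $E$, the weak sequential continuity of the lattice operations yields $|x_n|\overset{w}{\longrightarrow}0$, so that $x_n^{+}=\frac{1}{2}(|x_n|+x_n)$ and $x_n^{-}=\frac{1}{2}(|x_n|-x_n)$ are weakly null sequences in $E^{+}$. Applying $(4)$ first to $(x_n^{+})$ gives a subsequence along which the Ces\`{a}ro means of $(T(x_n^{+}))$ norm converge, and applying $(4)$ once more along this subsequence to the negative parts gives a further subsequence handling $(T(x_n^{-}))$; subtracting the two Ces\`{a}ro means would then produce the required convergence for $(T(x_n))$.

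The main obstacle is precisely the passage to this \emph{further} subsequence, because in general a subsequence of a Ces\`{a}ro-convergent sequence need not stay Ces\`{a}ro-convergent; this is the very difficulty that Proposition \ref{caract_almost ob} settles by invoking order continuity of the range and Lemma 6.3 of \cite{GTX}. Since here $Y$ is only assumed to enjoy the Schur property, I would let that property supply the missing robustness: as $T$ is bounded and $(x_n^{+})$ is weakly null in $E$, the sequence $(T(x_n^{+}))$ is weakly null in $Y$, hence \emph{norm} null by the Schur property; therefore the Ces\`{a}ro means of $(T(x_n^{+}))$, and those of \emph{every} one of its subsequences, converge to $0$ in norm, and likewise for the negative parts. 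This uniformity removes the obstruction, allows the two selections to be combined freely, and thereby closes the chain of equivalences.
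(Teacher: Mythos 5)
Your proposal is correct, and for the one substantive implication it takes a genuinely different (and more careful) route than the paper. The equivalence $(1)\Longleftrightarrow(2)$, the trivial implications, and $(2)\Longrightarrow(3)$ match the paper exactly: the paper invokes Theorem 2.1 of \cite{elb} where you argue directly that weak sequential continuity of the lattice operations gives $|x_n|\overset{w}{\longrightarrow}0$ and hence $x_n\overset{uaw}{\longrightarrow}0$, which is the same content. The divergence is in $(4)\Longrightarrow(3)$: the paper simply says the proof is ``the same as'' that of $(3)\Longrightarrow(2)$ of Proposition \ref{caract_almost ob}, but that argument leans on the order continuity of the range lattice and Lemma 6.3 of \cite{GTX} to recover Ces\`{a}ro convergence along further subsequences --- hypotheses not available here, since $Y$ is merely a Banach space with the Schur property and need not even be a lattice. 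You correctly identify this gap in the paper's cross-reference and repair it with the right tool: since $T$ is weak-to-weak continuous, $(T(x_n^{+}))$ and $(T(x_n^{-}))$ are weakly null in $Y$, hence norm null by the Schur property, so the Ces\`{a}ro means of every subsequence converge to zero and the two selections combine freely. In fact your observation can be pushed further: under the Schur hypothesis on $Y$, every one of the four statements holds for \emph{every} bounded operator $T$, since each concerns only weakly null sequences $(x_n)$ and $\|T(x_n)\|\longrightarrow 0$ already forces Ces\`{a}ro convergence; so the whole corollary is degenerate, and your argument exposes this while the paper's deferral obscures it.
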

\begin{proof}
$(1) \Longleftrightarrow (2)$ Proposition \ref{uaw-dwbs0}.

$(3) \Longrightarrow (2)$ Its obvious.

$(2) \Longrightarrow (3)$ Let $(x_n)$ be a weakly null sequence of $E$. Since  the lattice operations of $E$ are weak sequentially continuous, then it follows from Theorem 2.1 \cite{elb} that $x_n \overset{uaw}{\longrightarrow}0$. Hence, the statement $(2)$ yields that $(T(x_n))$ has a subsequence whose Ces\`{a}ro means is norm convergent in  $Y$.

$(3) \Longrightarrow (4)$ Its obvious.

$(4) \Longrightarrow (3)$ The proof is the same as of the implication $(3) \Longrightarrow(2)$ of Proposition \ref{caract_almost ob}.
\end{proof}
\begin{Cor}\label{uaw-dwbs00}
 Let $T:E \longrightarrow F$ be an operator such that $F$ has the Schur property. The following assertions are equivalent:
\begin{enumerate}
\item $T$ is a positive disjoint weak Banach-Saks operator.
\item For each weakly null  sequence $(x_n)$ of $E^+$ such that $x_n \overset{uaw}{\longrightarrow}0$, $(T(x_n))$ has a subsequence whose Ces\`{a}ro sequence is norm convergent in  $F$.
\item For each  weakly null sequence $(x_n)\subset E^+$, the sequence $(T(x_n))$  has a subsequence whose Ces\`{a}ro sequence is norm convergent in $F$.
\end{enumerate}
\end{Cor}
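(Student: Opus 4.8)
The plan is to prove the cycle $(1)\Rightarrow(2)\Rightarrow(3)\Rightarrow(1)$, using at each stage that $F=Y$ has the Schur property (so that weak convergence of the Ces\`{a}ro means upgrades to norm convergence) and that, exactly as in Theorem \ref{caract}, $T$ is understood to be positive, making the inequality $|T(x)|\le T(|x|)$ available. The genuine analytic content --- the Kadec--Pe\l czy\'{n}ski type splitting of Lemma \ref{uaw-lem} together with the passage from weak to norm convergence of the Ces\`{a}ro means --- has already been carried out in Proposition \ref{uaw-dwbs0}, so most of the argument is a matter of matching up quantifiers.

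For $(1)\Rightarrow(2)$ I would simply invoke Proposition \ref{uaw-dwbs0}. A positive disjoint weak Banach-Saks operator is in particular disjoint weak Banach-Saks, and since $F$ has the Schur property that proposition yields, for every weakly null sequence $(x_n)$ of $E$ with $x_n\overset{uaw}{\longrightarrow}0$, a subsequence of $(T(x_n))$ with norm convergent Ces\`{a}ro means. Restricting the quantifier to the sequences that happen to lie in $E^+$ is precisely assertion (2).

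The step I expect to carry the whole equivalence is $(2)\Rightarrow(3)$, and its point is that for positive sequences the $uaw$-hypothesis in (2) is no restriction at all. Indeed, if $(x_n)\subset E^+$ is weakly null, then $|x_n|=x_n\overset{w}{\longrightarrow}0$, so $(x_n)$ is absolutely weakly null and therefore $uaw$-null, as recalled in the preliminaries. Hence every weakly null sequence of $E^+$ already satisfies the hypotheses of (2), and (2) delivers exactly the conclusion of (3). This is the only place where one has to \emph{see} something rather than cite something: positivity collapses the classes of test sequences occurring in (2) and (3) into one and the same class.

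Finally, for $(3)\Rightarrow(1)$ I would appeal to Theorem \ref{caract}. A disjoint weakly null sequence $(x_n)\subset E^+$ is in particular a weakly null sequence of $E^+$, so (3) furnishes a subsequence of $(T(x_n))$ whose Ces\`{a}ro sequence is norm convergent in $F$; this is exactly condition (2) of Theorem \ref{caract}, whence $T$ is a positive disjoint weak Banach-Saks operator and the cycle closes. Throughout, $T$ is taken positive, in keeping with the convention of Theorem \ref{caract}, since assertions (2) and (3) say nothing about the sign of $T$ and cannot by themselves produce positivity.
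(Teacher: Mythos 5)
Your proposal is correct and follows essentially the same route as the paper: the paper likewise reduces $(1)\Rightarrow(2)$ to the argument of Proposition \ref{uaw-dwbs0}, proves $(2)\Rightarrow(3)$ by observing that a weakly null sequence in $E^+$ is automatically absolutely weakly null and hence $uaw$-null, and obtains $(3)\Rightarrow(1)$ from Theorem \ref{caract}. Your closing remark that positivity of $T$ must be assumed throughout (since (2) and (3) cannot produce it) is a fair point that the paper leaves implicit.
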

\begin{proof}
$(3) \Longrightarrow (2)$  Obvious.

 $(3) \Longrightarrow (1)$ It follows from the Theorem \ref{caract}.

$(2) \Longrightarrow (3)$ Let $(x_n)$ be a  weakly null sequence of $E^+$, then  $x_n \longrightarrow 0$ in the absolute weak topology and hence it follows  from \cite{OZ} that  $x_n \overset{uaw}{\longrightarrow}0$. By the assumption $(2)$, the sequence $(T(x_n))$  has a subsequence whose Ces\`{a}ro sequence is norm convergent in $F$.

$(2) \Longrightarrow (1)$ Let $(x_n)$ be a disjoint weakly null sequence of $E^+$. It follows from Lemma 2 \cite{OZ} that $x_n \overset{uaw}{\longrightarrow}0$ and hence  $(T(x_n))$ has a Ces\`{a}ro convergent subsequence in $F$.

$(1) \Longrightarrow (2)$  The proof is the same as of the implication $(1) \Longrightarrow(2)$ of Proposition \ref{uaw-dwbs0}.
\end{proof}
Note that a weak Banach-Saks operator is disjoint weak Banach-Saks. But the converse is not true in general. In fact,  the Banach lattice $E=L_p(c_0)=L_p([0,1];c_0)$, where $0<p<1$,  has the disjoint weak Banach-Saks property (\cite[example 6.10]{GTX}) but fails to have the weak Banach-Saks property. So, $Id_{E}$ the identity operator of $E$ is a disjoint weak Banach-Saks operator which is not weak Banach-Saks.

In the following result, we give sufficient conditions on  $E$  and   $Y$ under which  each  disjoint weak Banach-Saks operator from $E$ to $Y$ is  weak Banach-Saks, for each $Y$.
\begin{Th}\label{dwbs-wbs}
 Each disjoint weak Banach-Saks operator $T:E \longrightarrow Y$  is weak Banach-Saks, if one of the following assertions is valid:
\begin{enumerate}
\item $E$ has the subsequence splitting property.
\item $E$ is order continuous and atomic.
\item $E$ and $E^{\prime}$ are order continuous.
\item The lattice operations of $E$ are weakly sequentially continuous and $Y$ has the Schur property.
\end{enumerate}
\end{Th}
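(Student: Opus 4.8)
The plan is to dispatch assertion (4) immediately and then treat (1),(2),(3) by a single disjointification template. For (4), recall that a weak Banach-Saks operator is, by definition, one for which every weakly null $(x_n)\subset E$ yields a subsequence of $(Tx_n)$ with norm convergent Ces\`{a}ro means; since the Ces\`{a}ro means of a weakly null sequence are themselves weakly null, "norm convergent'' here is automatically "norm convergent to $0$''. But this is exactly assertion (3) of Corollary \ref{uaw-dwbs}, which under the hypotheses of (4) is equivalent to $T$ being disjoint weak Banach-Saks. So (4) follows with no extra work.

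For (1),(2),(3) the common goal is: given a weakly null (hence norm bounded) sequence $(x_n)$ in $E$, produce a subsequence $(x_{n_k})$ and a splitting $x_{n_k}=y_k+z_k$ with $y_k\perp z_k$, where $(z_k)$ is disjoint and $(y_k)$ is almost order bounded, and then show both pieces are harmless. First I would record a fact needed throughout: a \emph{weakly convergent disjoint sequence has limit $0$}. Indeed, if $(z_k)$ is disjoint and $z_k\overset{w}{\to}v$, then by Mazur's theorem $v$ lies in the norm-closed convex hull of every tail, so one can choose successive finite convex combinations $c_j$ of $(z_k)$ supported on disjoint blocks of indices with $\|c_j-v\|\to 0$; the $c_j$ are then pairwise disjoint, whence $\|c_j-c_i\|\geq\|c_j\|$ forces $\|c_j\|\to 0$ and $v=0$. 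Applying this (after passing to a subsequence along which the almost order bounded part $(y_k)$ converges weakly, using that almost order bounded sets are relatively weakly compact in an order continuous lattice) shows $(z_k)$, and hence $(y_k)$, is weakly null. Now $(z_k)$ is a disjoint weakly null sequence, so the hypothesis that $T$ is disjoint weak Banach-Saks gives a subsequence of $(Tz_k)$ whose Ces\`{a}ro means are norm null.

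It remains to control the almost order bounded part $(y_k)$, and here the three structural hypotheses enter only in \emph{how the splitting is produced}. Under (1) the subsequence splitting property supplies the decomposition directly. Under (2), order continuity makes the tails of each vector small while atomicity provides an unconditional basis of atoms; a gliding-hump argument then yields a subsequence with $\|x_{n_k}-z_k\|\to 0$ for a disjoint (block) sequence $(z_k)$, so the almost order bounded remainder is in fact norm null and the case is finished by $\|Tx_{n_k}-Tz_k\|\to 0$ together with the disjoint case. Under (3), order continuity of $E$ furnishes the almost order bounded/disjoint splitting of a weakly null sequence (via the Kadec--Pe\l czy\'{n}ski dichotomy in order continuous lattices), while order continuity of $E'$ guarantees directly that the disjoint part, being norm bounded and disjoint, is weakly null. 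In each case the conclusion is assembled by a diagonal choice of subsequence so that the Ces\`{a}ro means of $(Ty_k)$ and of $(Tz_k)$ are simultaneously norm null.

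The main obstacle is the treatment of the almost order bounded part $(y_k)$ in cases (1) and (3), where it need \emph{not} be norm null (for instance the Rademacher sequence in $L_1[0,1]$, or in $L_2[0,1]$ for case (3)). Since the disjoint weak Banach-Saks hypothesis only sees disjoint sequences, this part cannot be handled by the hypothesis on $T$ and must instead be controlled intrinsically in $E$: the key point to establish is that \emph{every almost order bounded (equivalently, L-weakly compact) weakly null sequence has a subsequence whose Ces\`{a}ro means converge to $0$ in norm}, after which boundedness of $T$ transfers this to $(Ty_k)$. Proving this Banach--Saks property of L-weakly compact sequences — using the weak compactness of order intervals under order continuity and an iterated averaging argument exploiting almost order boundedness in the manner of Szlenk's theorem — is the technical heart, together with the verification that the desired almost order bounded/disjoint splitting is available under the order continuity of $E$ and $E'$ in case (3).
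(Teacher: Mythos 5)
Your treatment of cases (1), (2) and (4) follows essentially the same lines as the paper: (4) is indeed immediate from Corollary \ref{uaw-dwbs}; (2) is the norm-approximation of a weakly null sequence by a disjoint sequence (the paper invokes Lemmas 6.7 and 6.14 of \cite{GTX} rather than a hand-made gliding hump, but the idea is identical); and (1) uses the splitting $x_{n_k}=y_k+z_k$, feeds the disjoint part to the hypothesis on $T$, and controls the almost order bounded part separately. Your observation that a weakly convergent disjoint sequence must converge to $0$ (via Mazur and disjoint convex blocks) is correct and fills in a step the paper leaves implicit. What you call ``the technical heart'' --- that an almost order bounded weakly null sequence in an order continuous lattice has a subsequence all of whose further subsequences have norm-null Ces\`{a}ro means --- is exactly what the paper gets from Lemma 6.3 of \cite{GTX}; you correctly identify the needed statement but leave it as a sketch, and note that the ``every further subsequence'' form of it is what makes your final diagonalization with the disjoint part legitimate.

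Case (3) is where you genuinely diverge, and where there is a gap. You propose to extract an almost order bounded/disjoint splitting of an arbitrary weakly null sequence from order continuity of $E$ alone, ``via the Kadec--Pe\l czy\'{n}ski dichotomy''. That dichotomy gives an alternative, not a decomposition; the decomposition you need is precisely the subsequence splitting property, which is a strictly stronger hypothesis than order continuity and is exactly what distinguishes case (1) from case (3) in the statement of the theorem. If order continuity of $E$ sufficed to split weakly null sequences, case (3) would collapse into case (1) and the hypothesis on $E'$ would play no role in producing the splitting. The paper's route for (3) avoids this entirely and is much shorter: when $E$ and $E'$ are both order continuous, Wickstead's theorem (Theorem 5 of \cite{w1}) says every weakly null sequence is already uo-null, and then the equivalence $(1)\Longleftrightarrow(4)$ of Proposition \ref{caract_typeconv} --- which was proved earlier precisely for this purpose --- yields at once that $(T(x_n))$ has a subsequence with norm convergent Ces\`{a}ro means. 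You should replace your argument for (3) by this two-line reduction, or else supply a proof of the splitting claim, which as stated is not justified.
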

\begin{proof}
$1)$  Let $(x_n)$ be a weakly null sequence of $E$. Since $E$ has the subsequence splitting property, by passing to a subsequence we may assume that $x_n=y_n+z_n$, where $(y_n)$ is almost order bounded and  $(z_n)$ is disjoint. Since $E$ is order continuous, then by passing to a further subsequence, we may assume that every subsequence of $(y_n)$ and therefore of $(T(y_n))$ is Ces\`{a}ro convergent (\cite[lemma 6.3]{GTX}). We have $(x_n)$ is a weakly null sequence, then $(z_n)$ is also a weakly null sequence. Since $T$ is a disjoint weak Banach-Saks operator, then $(T(z_n))$ has a Ces\`{a}ro convergent subsequence $(T(z_{n_k}))$. We put $T(x_{n_k})=T(y_{n_k})+T(z_{n_k})=T(y_{n_k}+z_{n_k})$, where $(y_{n_k})$ is a subsequence of $(y_n)$. We have $(T(x_{n_k}))$ is a Ces\`{a}ro convergent subsequence of $(T(x_n))$, hence $T$ is a  weak Banach-Saks operator.

$2)$ It follows from Lemma 6.7 and Lemma 6.14 \cite{GTX}.

$3)$ Let $(x_n)$ be a weakly null sequence of $E$. Since  $E$ and $E^{\prime}$ are order continuous, then it follows from Theorem 5 \cite{w1}  that $(x_n)$ is an uo-null  sequence of $E$. As $T$ is a disjoint weak Banach-Saks operator, then by the Proposition \ref{caract_typeconv} we infer that $(T(x_n))$ has a subsequence whose Ces\`{a}ro sequence are norm convergent in $Y$. That is $T$ is weak Banach-Saks.

$4)$ It follows from the Corollary \ref{uaw-dwbs}.
\end{proof}
Note that an almost Banach-Saks operator is disjoint weak Banach-Saks. But the converse is not true in general. In fact, the identity operatot $Id_{\ell^1}: \ell^1 \longrightarrow \ell^1$ is disjoint weak Banach-Saks (because $\ell^1$ has the disjoint weak Banach-Saks property), but follows from \cite{HHS} that $Id_{\ell^1}$  is not almost Banach-Saks.

 In the following result, we give necessary and sufficient conditions on  $E$ and  $Y$ under which each disjoint weak Banach-Saks operator from $E$ to $Y$ is almost Banach-Saks.
\begin{Th}\label{dwbs-abs}
 The following assertions are equivalent:
 \begin{enumerate}
 \item  Each disjoint weak Banach-Saks operator  $T:E\longrightarrow Y$ is almost Banach-Saks.
 \item  One of the following assertions is valid:
\begin{enumerate}
\item[(a)]  $E^{\prime}$ is order continuous,
\item[(b)]  $Y$  has the  Banach-Saks property.
\end{enumerate}
\end{enumerate}
\end{Th}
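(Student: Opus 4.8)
The plan is to prove both implications, handling $(2)\Rightarrow(1)$ directly and $(1)\Rightarrow(2)$ by contraposition. Everything hinges on the classical characterization that $E'$ is order continuous if and only if every norm bounded disjoint sequence of $E^+$ is weakly null (equivalently, $E$ contains no lattice copy of $\ell^1$), which I would invoke as a known fact.

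For $(2)\Rightarrow(1)$, let $T:E\longrightarrow Y$ be disjoint weak Banach-Saks and let $(x_n)$ be a norm bounded disjoint sequence of $E$. Under $(a)$, the sequence $(|x_n|)$ is norm bounded and disjoint in $E^+$, hence weakly null by the characterization, and then $(x_n)$ is weakly null as well (since $0\le x_n^{\pm}\le|x_n|$ forces $f(x_n^{\pm})\to 0$ for every $f\in(E')^+$); as $T$ is disjoint weak Banach-Saks, $(T(x_n))$ has a Ces\`aro convergent subsequence, so $T$ is almost Banach-Saks. Under $(b)$, the sequence $(T(x_n))$ is norm bounded in $Y$, so it has a Ces\`aro convergent subsequence by the Banach-Saks property, and here \emph{every} bounded operator is almost Banach-Saks.

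For $(1)\Rightarrow(2)$ I argue by contraposition: assuming $E'$ is not order continuous and $Y$ fails the Banach-Saks property, I build a disjoint weak Banach-Saks operator that is not almost Banach-Saks. Since $E'$ is not order continuous there is a norm bounded disjoint sequence $(v_n)\subset E^+$ that is not weakly null, so after passing to a subsequence I may fix $g\in(E')^+$ with $g(v_n)\ge\delta>0$ and $\|v_n\|\le M$; since $Y$ fails Banach-Saks there is $(y_n)\subset Y$ with $\|y_n\|\le 1$ no subsequence of which is Ces\`aro convergent. The decisive step is to disjointify $g$ along $(v_n)$: because $E'$ is Dedekind complete, the components $g_n\in(E')^+$ of $g$ in the bands generated by the $v_n$ satisfy $g_n(v_m)=\delta_{nm}\,g(v_n)$ and $\sum_n g_n\le g$, whence $\sum_n|g_n(x)|\le g(|x|)\le\|g\|\,\|x\|$. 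Thus $R:E\longrightarrow\ell^1$, $R(x)=(g_n(x))_n$, is a bounded positive operator with $R\big(v_m/g(v_m)\big)=e_m$.

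Setting $w_m=v_m/g(v_m)$ (a bounded disjoint sequence of $E^+$ with $g(w_m)=1$, hence \emph{not} weakly null), letting $\Phi:\ell^1\longrightarrow Y$ be the bounded operator with $\Phi(e_m)=y_m$, and putting $T=\Phi\circ R$, I get $T(w_m)=y_m$, so $(T(w_m))$ has no Ces\`aro convergent subsequence and $T$ is not almost Banach-Saks. On the other hand $R$ is disjoint weak Banach-Saks, indeed it sends any weakly null sequence of $E$ to a weakly null sequence of $\ell^1$, which is norm null by the Schur property and hence Ces\`aro convergent to $0$; therefore $T=\Phi\circ R$ is disjoint weak Banach-Saks by Proposition \ref{compose1}(1). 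This contradicts $(1)$. The main obstacle is precisely this construction: one cannot in general extend the naive assignment $v_n\mapsto y_n$ from the $\ell^1$-sublattice to all of $E$, since that sublattice need not be complemented. Routing the construction through the Schur space $\ell^1$ via the disjointification $R$ circumvents this, as $R$ is automatically disjoint weak Banach-Saks while the failure of the Banach-Saks property of $Y$ is carried into $T$ only along the non-weakly-null sequence $(w_m)$, which the disjoint weak Banach-Saks condition never sees.
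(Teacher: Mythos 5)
Your proof is correct, and while the overall skeleton (prove $(2)\Rightarrow(1)$ directly, prove $(1)\Rightarrow(2)$ by contraposition via an operator factoring through $\ell^1$) matches the paper, your construction of the counterexample operator is genuinely different and more self-contained. The paper invokes Meyer--Nieberg (Theorem 2.4.14 and Proposition 2.3.11) to obtain a complemented lattice copy of $\ell^1$ in $E$ together with a positive projection $P:E\to\ell^1$, sets $T=S\circ P$, and then must argue that $T$ fails to be almost Banach--Saks by composing with the lattice embedding $\imath:\ell^1\to E$ and appealing to a permanence result for almost Banach--Saks operators under lattice homomorphisms. You instead build the factoring map $R:E\to\ell^1$ by hand: starting from a bounded disjoint sequence $(v_n)\subset E^+$ that is not weakly null (the standard witness of non-order-continuity of $E'$), you disjointify the functional $g$ along the $(v_n)$ to get pairwise disjoint components $g_n$ with $g_n(v_m)=\delta_{nm}g(v_n)$ and $\sum_n g_n\le g$, which makes $R(x)=(g_n(x))_n$ a bounded positive operator into $\ell^1$ with $R(w_m)=e_m$. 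This buys you two things: you avoid citing the complemented-copy theorem (your disjointification lemma is the elementary core of that theorem anyway), and you exhibit the failure of almost Banach--Saks directly on the bounded disjoint sequence $(w_m)$ inside $E$, with no detour through $T\circ\imath$. Both proofs ultimately rest on the same mechanism -- the operator is DWBS because it factors through $\ell^1$ (Schur, hence weak Banach--Saks), yet carries a non-weakly-null disjoint sequence onto a sequence with no Ces\`aro convergent subsequence -- so the two arguments are equivalent in spirit, with yours being the more explicit and arguably cleaner version of the crucial implication.
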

\begin{proof}
$(1) \Longrightarrow (2)$
 Assume that neither the norm of $E^{\prime}$ is order continuous nor $Y$ has the  Banach-Saks property. Then, by Theorem 2.4.14 and Proposition 2.3.11 \cite{MN} $E$ contains a complemented copy of $\ell^{1}$ and there exists a positive projection $P:E\longrightarrow \ell^1$, on the other hand since $Y$ does not have the  Banach-Saks property then, there exists $(y_n)$ a norm bounded sequence of $Y$ with no Ces\`{a}ro convergent subsequences.

We consider the following operator:
$$\begin{array}{clcl}
  S: &\ell^{1}  &\longrightarrow Y&\\
      &(\lambda_n) &\longmapsto &\sum_{n=1}^{\infty}\lambda_n y_n\\
 \end{array}$$
$S$ is well defined.

Now, we consider the composed operator $T=S\circ P$. Since $\ell^1$ has the weak Banach-Saks property, then both $S$ and $P$ are weak Banach-Saks operators. It follows that $T$ is weak Banch-Saks and hence $T$ is disjoint weak Banach-Saks. To finish the proof, we have to claim that $T$ is not an almost Banach-Saks operator. Otherwise, since the injection $\imath: \ell^1\longrightarrow E $ is a lattice homomorphism, then it follows from  Propostion 3.2  \cite{HHS} that $T \circ \imath  $ is an almost Banach-Saks operator. But by taking $(e_n)$ the unit basis of $\ell^1$ as a norm bounded disjoint sequence, we have $T \circ \imath(e_n)=y_n$ with no Ces\`{a}ro convergent subsequence, which is a contradiction.

$(2)(a) \Longrightarrow (1)$ Let $T:E\longrightarrow Y$ be a disjoint weak Banach-Saks operator and let $(x_n)$ be a norm bounded disjoint sequence in $E$. Since $E'$ is order continuous, then by \cite[theorem 2.4.14]{MN} the sequence $(x_n)$ is also weakly null and so  $(Tx_n)$ has a subsequence whose Ces\`{a}ro sequence is norm convergent in $Y$. Hence, the operator $T$ is  almost Banach-Saks.

$(2)(b) \Longrightarrow (1)$  Its obvious.
\end{proof}
As  consequences of the Theorem \ref{dwbs-abs}, we have the following results.
\begin{Cor}\label{cor03}
The following assertions are equivalent:
 \begin{enumerate}
 \item  Each disjoint weak Banach-Saks operator  $T:E\longrightarrow E$ is almost Banach-Saks,
 \item $E^{\prime}$ is order continuous.
\end{enumerate}
\end{Cor}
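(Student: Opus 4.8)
The plan is to read off Corollary~\ref{cor03} as the special case $Y=E$ of Theorem~\ref{dwbs-abs}, the only genuine work being to check that the disjunction in condition~$(2)$ of that theorem collapses to a single clause when the target space coincides with the domain. Setting $Y=E$, Theorem~\ref{dwbs-abs} tells us that ``each disjoint weak Banach--Saks operator $T:E\longrightarrow E$ is almost Banach--Saks'' is equivalent to ``$E'$ is order continuous \emph{or} $E$ has the Banach--Saks property''. The implication $(2)\Longrightarrow(1)$ of the corollary is then immediate: if $E'$ is order continuous, clause $(2)(a)$ of Theorem~\ref{dwbs-abs} holds with $Y=E$, so every such operator is almost Banach--Saks.

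For the converse $(1)\Longrightarrow(2)$, assumption~$(1)$ is precisely condition~$(1)$ of Theorem~\ref{dwbs-abs} with $Y=E$, hence either $E'$ is order continuous or $E$ has the Banach--Saks property, and it suffices to show that the second alternative already forces the first. Here I would invoke the classical theorem of Nishiura and Waterman that the Banach--Saks property implies reflexivity: if $E$ has the Banach--Saks property then $E$ is reflexive, so its dual $E'$ is reflexive as well. Since a reflexive Banach lattice is a $KB$-space and every $KB$-space is order continuous, $E'$ is order continuous. Thus in both cases $(2)$ holds, completing the equivalence.

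The step I expect to be the crux is exactly this absorption of ``$E$ has the Banach--Saks property'' into ``$E'$ is order continuous''; it is what makes the corollary a genuine strengthening of the specialized theorem rather than a tautology, since for a general target $Y$ the Banach--Saks property of $Y$ is an independent sufficient condition. An alternative route to the same conclusion, which stays strictly within the tools already used in the proof of Theorem~\ref{dwbs-abs}, is to note that the Banach--Saks property passes to closed subspaces while $\ell^1$ fails it, so a space with the Banach--Saks property contains no copy of $\ell^1$; by the characterization of order continuity of the dual via the absence of $\ell^1$ (Theorem~2.4.14 \cite{MN}), this again yields that $E'$ is order continuous. Either way, every remaining step is a verbatim specialization of Theorem~\ref{dwbs-abs} and requires no further computation.
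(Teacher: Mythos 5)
Your proposal is correct and matches the paper's (implicit) approach: the corollary is stated there without proof as a direct consequence of Theorem~\ref{dwbs-abs} with $Y=E$, and the only content is exactly the absorption step you identify, namely that the Banach--Saks property of $E$ forces $E$ (hence $E'$) to be reflexive and therefore $E'$ to have order continuous norm. Both of your routes to that step (Nishiura--Waterman reflexivity, or non-containment of $\ell^1$ plus Theorem~2.4.14 of \cite{MN}) are sound.
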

\begin{Cor}(Proposition 6.15 \cite{GTX})
A Banach lattice $E$ with the DWBSP has the DBSP if and only if it contains no lattice copy of $\ell^1$.
\end{Cor}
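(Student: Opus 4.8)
The plan is to read the DBSP of $E$ as the statement that the identity operator $Id_E$ is almost Banach--Saks, and then to run the two implications through Corollary \ref{cor03} together with the classical dichotomy that, for a Banach lattice, $E'$ is order continuous if and only if $E$ contains no lattice copy of $\ell^1$ (Theorem 2.4.14 \cite{MN}, the same result already invoked in the proof of Theorem \ref{dwbs-abs}). Throughout, $E$ is assumed to have the DWBSP, so that $Id_E$ is a disjoint weak Banach--Saks operator from $E$ into $E$.

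First I would treat the ``if'' direction. Assume $E$ contains no lattice copy of $\ell^1$; then $E'$ is order continuous. Since $E$ has the DWBSP, $Id_E$ is a disjoint weak Banach--Saks operator, so by Corollary \ref{cor03} (the implication ``$E'$ order continuous $\Longrightarrow$ every disjoint weak Banach--Saks operator $E\to E$ is almost Banach--Saks'') the operator $Id_E$ is almost Banach--Saks. That is exactly the assertion that every norm bounded disjoint sequence of $E$ has a subsequence with norm convergent Ces\`{a}ro means, i.e. $E$ has the DBSP.

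For the ``only if'' direction I would argue directly, because this implication does not pass through Corollary \ref{cor03}. Suppose $E$ contains a lattice copy of $\ell^1$, realised by a lattice isomorphism $J:\ell^1\to E$ onto a closed sublattice, with $a\|u\|_1\le \|Ju\|\le b\|u\|_1$ for some $0<a\le b$. Put $x_n=J(e_n)$, where $(e_n)$ is the unit vector basis; these are positive, pairwise disjoint (a lattice homomorphism preserves disjointness) and norm bounded by $b$. For any subsequence $(x_{n_k})$ one has $\frac1m\sum_{k=1}^m x_{n_k}=J\bigl(\frac1m\sum_{k=1}^m e_{n_k}\bigr)$, and since $\|\frac1{2m}\sum_{k=1}^{2m}e_{n_k}-\frac1m\sum_{k=1}^m e_{n_k}\|_1=1$ in $\ell^1$, the lower estimate gives $\|\frac1{2m}\sum_{k=1}^{2m}x_{n_k}-\frac1m\sum_{k=1}^m x_{n_k}\|\ge a$. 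Hence no subsequence of $(x_n)$ has norm convergent Ces\`{a}ro means, so $Id_E$ is not almost Banach--Saks and $E$ fails the DBSP. Contrapositively, if $E$ has the DBSP then $E$ contains no lattice copy of $\ell^1$.

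The one genuinely delicate point is that the two directions are not symmetric. The ``if'' part really uses the DWBSP of $E$, entering through the fact that $Id_E$ is disjoint weak Banach--Saks so that Corollary \ref{cor03} applies to it; the ``only if'' part, by contrast, is an unconditional fact about $\ell^1$ and must be witnessed by an explicit disjoint bounded sequence with no Ces\`{a}ro convergent subsequence, since Corollary \ref{cor03} only guarantees that \emph{some} disjoint weak Banach--Saks operator fails to be almost Banach--Saks, not the identity. Combining the two implications with the order continuity / no-$\ell^1$-copy dichotomy then yields the stated equivalence.
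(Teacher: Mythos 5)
Your proof is correct and follows essentially the route the paper intends: the paper states this corollary without an explicit proof, as a consequence of Theorem \ref{dwbs-abs} (equivalently Corollary \ref{cor03}) combined with the Meyer--Nieberg characterization that $E'$ is order continuous if and only if $E$ contains no lattice copy of $\ell^1$, which is exactly how you handle the ``if'' direction after identifying the DBSP with $Id_E$ being almost Banach--Saks. Your direct computation with the $\ell^1$ unit basis for the ``only if'' direction correctly supplies the step the paper leaves implicit, and your observation that Corollary \ref{cor03} alone does not yield that direction is a fair and accurate point.
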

Recall from   Theorem 5.57 \cite{AB1} that an operator $T: E\longrightarrow Y$ is order weakly compact if, and on if,  for evry order bounded disjoint sequence $(x_n)$ of $E$,  we have  $\|T(x_n)\| \longrightarrow 0$.
\begin{Th}\label{owc_dwbs}
 If either $E$ is $\sigma$-laterally complete or AM-space with unit, then every o-weakly compact operator from $E$ to $Y$ is disjoint weak Banach-Saks.
\end{Th}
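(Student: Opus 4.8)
The plan is to reduce both hypotheses to a single structural observation: under either assumption every disjoint weakly null sequence of $E$ is automatically order bounded. Once this is established, the definition of order weak compactness forces the images to be norm null, and since the Ces\`{a}ro means of a norm null sequence are themselves norm null, the disjoint weak Banach-Saks property follows at once. So I would first fix a disjoint weakly null sequence $(x_n)$ of $E$ together with an order weakly compact operator $T:E\longrightarrow Y$, and recall from Remark 1 of \cite{Wnuk1} that $(|x_n|)$ is then a disjoint sequence of $E^+$.

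For the $\sigma$-laterally complete case I would invoke the definition directly: the supremum $u:=\sup_n |x_n|$ of the disjoint positive sequence $(|x_n|)$ exists in $E$, whence $|x_n|\leq u$ for every $n$, that is, $(x_n)\subset[-u,u]$ is order bounded. For the case where $E$ is an AM-space with unit $e$, I would first use that a weakly null sequence is norm bounded, say $M:=\sup_n\|x_n\|<\infty$, and then apply the characteristic domination $|x|\leq\|x\|\,e$ valid in such a space to obtain $|x_n|\leq Me$ for all $n$; hence $(x_n)\subset[-Me,Me]$ is again order bounded.

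In either situation $(x_n)$ is an \emph{order bounded} disjoint sequence, so the order weak compactness of $T$ yields $\|T(x_n)\|\longrightarrow 0$. Because a norm null sequence has norm null Ces\`{a}ro means, the full sequence already satisfies $\|\frac{1}{m}\sum_{k=1}^{m}T(x_k)\|\longrightarrow 0$, so in particular $(T(x_n))$ has a subsequence whose Ces\`{a}ro sequence is norm convergent in $Y$; that is, $T$ is disjoint weak Banach-Saks.

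The only genuinely non-routine point is the order boundedness step, and it is here that the two distinct hypotheses enter: $\sigma$-lateral completeness supplies the existence of $\sup_n|x_n|$, while the AM-space-with-unit hypothesis supplies the domination $|x|\leq\|x\|\,e$ (combined with the boundedness of weakly convergent sequences). Everything afterwards is the mechanical passage from norm convergence to Ces\`{a}ro convergence, so I expect the verification that each hypothesis really does make disjoint weakly null sequences order bounded to be the crux of the argument.
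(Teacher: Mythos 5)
Your proof is correct and follows essentially the same route as the paper: in both cases the point is that the disjoint weakly null sequence is order bounded (via the existence of $\sup_n|x_n|$ under $\sigma$-lateral completeness, or via $|x_n|\leq\|x_n\|e$ and norm boundedness in an AM-space with unit), after which Dodds' characterization of order weak compactness gives $\|T(x_n)\|\to 0$ and the Ces\`{a}ro means converge trivially. The paper merely asserts the order boundedness step, which you spell out correctly.
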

\begin{proof}
 Let $T: E\longrightarrow Y$ be an order weakly compact operator and let $(x_n)$ be a disjoint weakly sequence in $E$. Since $E$ is $\sigma$-laterally complete (resp, $E$ is AM-space with unit), then $(x_n)$ is order bounded and hence $(T(x_n)$ is norm-null. So, $(T(x_n))$ has a Ces\`{a}ro convergent subsequence.
\end{proof}
We note that a disjoint weak Banach-Saks operator is not necessary o-weakly compact.  In fact, the identity operator $Id_{\ell^{\infty}}: \ell^{\infty} \longrightarrow \ell^{\infty}$ is disjoint weak Banach-Saks (because $\ell^{\infty}$ has the disjoint weak Banach-Saks property) but it  is not o-weakly compact(because $\ell^{\infty}$ is not order continuous).

By the proof of Theorem 2.2 \cite{AH2}, we can investegate the following result.
\begin{Th}
 If $E$ has the disjoint weak Banach-Saks property, then the following assertions are equivalent:
 \begin{enumerate}
 \item  Each order bounded operator from $E$ to $F$ is order weakly compact.
 \item  Each order bounded  disjoint weak Banach-Saks operator  $T:E\longrightarrow F$ is order weakly compact.
 \item  One of the following assertions is valid:
\begin{enumerate}
\item[(a)]  $E$ is order continuous,
\item[(b)]  $F$ is order continuous.
\end{enumerate}
\end{enumerate}
\end{Th}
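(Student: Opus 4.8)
The plan is to deduce the equivalence from two observations: that the hypothesis ``$E$ has the DWBSP'' collapses the distinction between assertions $(1)$ and $(2)$, and that the equivalence of $(1)$ with $(3)$ is the content of the proof of Theorem 2.2 \cite{AH2}, which I would reproduce in the present setting. So I would first dispose of $(1)\Longleftrightarrow(2)$. Since $E$ has the disjoint weak Banach-Saks property, Corollary \ref{identity} shows that every operator from $E$ into $F$ --- in particular every order bounded operator --- is disjoint weak Banach-Saks. Hence the class of order bounded operators and the class of order bounded disjoint weak Banach-Saks operators from $E$ to $F$ coincide, and assertions $(1)$ and $(2)$ are literally the same statement. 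Thus it remains to prove $(1)\Longleftrightarrow(3)$.

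For $(3)\Longrightarrow(1)$ I would argue by cases. If $E$ is order continuous, then every order bounded disjoint sequence $(x_n)$ of $E$ is norm null (the defining property of order continuity, \cite{AB1}), so $\|T(x_n)\|\le\|T\|\,\|x_n\|\to 0$ for any bounded $T$, and $T$ is order weakly compact. If instead $F$ is order continuous, then $F$ is Dedekind complete, so the order bounded operator $T$ has a modulus $|T|$; since $\|T(x_n)\|\le\||T|(|x_n|)\|$ and $(|x_n|)$ is again order bounded and disjoint, it suffices to treat a positive operator $S\ge 0$. For a disjoint sequence $0\le x_n\le u$ the partial sums $s_N=\sum_{k=1}^{N}x_k=\bigvee_{k=1}^{N}x_k\le u$ increase, whence $S s_N\uparrow$ with $0\le S s_N\le S u$; Dedekind completeness yields a supremum and order continuity of $F$ forces $(S s_N)$ to be norm convergent, hence norm Cauchy, so $\|S x_N\|=\|S s_N-S s_{N-1}\|\to 0$. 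Therefore $S$, and so $T$, is order weakly compact.

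The substantial part is $(1)\Longrightarrow(3)$, which I would prove contrapositively, following the construction in the proof of Theorem 2.2 \cite{AH2}. Assuming that neither $E$ nor $F$ is order continuous, I would extract from $E$ an order bounded disjoint sequence $0\le x_n\le u$ with $\|x_n\|\ge\delta>0$ together with norm-one functionals witnessing $\|x_n\|$, and from $F$ a positive order bounded disjoint sequence $0\le y_n\le w$ with $\|y_n\|\ge\varepsilon>0$ spanning a sublattice lattice-isomorphic to $c_0$ (both exist by the failure of order continuity, \cite{MN}, \cite{AB1}). Using these data one builds an order bounded operator $T:E\longrightarrow F$ carrying the disjoint sequence $(x_n)$ onto vectors of norm bounded below by $\delta\varepsilon$, so that $\|T(x_n)\|\not\to 0$ while $(x_n)$ is order bounded and disjoint; this exhibits $T$ as order bounded but not order weakly compact, contradicting $(1)$.

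The main obstacle is precisely this last construction. Because an order bounded disjoint sequence in $E$ is automatically weakly null, no positive functional can detect it with a uniform lower bound, so the operator realizing $x_n\mapsto y_n$ cannot be taken positive, and one must simultaneously guarantee that the defining expression converges and stays order bounded in the non-order-continuous lattice $F$. This is exactly the delicate point handled in the proof of Theorem 2.2 \cite{AH2}, on which I would rely. Combining the three implications, $(1)\Longleftrightarrow(2)\Longleftrightarrow(3)$ follows.
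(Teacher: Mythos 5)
Your proposal is correct and takes essentially the same route as the paper, which offers no written proof beyond the remark that the result follows from the proof of Theorem 2.2 of \cite{AH2} (i.e.\ the equivalence $(1)\Longleftrightarrow(3)$ you defer to that reference). The one piece of glue the paper leaves implicit --- that the DWBSP hypothesis together with Corollary \ref{identity} makes every (order bounded) operator from $E$ disjoint weak Banach-Saks, so that $(1)$ and $(2)$ coincide --- is exactly what you supply, and your direct verification of $(3)\Longrightarrow(1)$ via the Dodds characterization is sound.
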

We note that the identity operator $Id_{c_0}$  of the Banach lattice $c_0$ is a disjoint weak Banach-Saks operator which is not weakly compact. Conversely, weakly compact operators are not in general  disjoint weak Banach-Saks. In fact, the identity operator of the Baerstein space (see \cite{B}) is weakly compact but fails to be disjoint weak Banach-Saks.
\begin{Th}\label{wc_dwbs}
If either $E^{\prime}$  has the positive Schur property or $Y$ has the schur property, then every weakly compact operator from $E$ to $Y$ is  disjoint weak Banach-Saks.
\end{Th}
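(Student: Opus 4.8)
The plan is to treat the two hypotheses separately, in each case reducing to the stronger claim that $T$ sends every disjoint weakly null sequence to a norm null one: once $\|T(x_n)\|\longrightarrow 0$, the Ces\`{a}ro means of $(T(x_n))$ converge to $0$ in norm and $T$ is disjoint weak Banach-Saks.

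First I would dispose of the case where $Y$ has the Schur property. If $(x_n)$ is a disjoint weakly null sequence of $E$, then, $T$ being continuous and hence weak-weak continuous, $T(x_n)\overset{w}{\longrightarrow} 0$ in $Y$; the Schur property yields $\|T(x_n)\|\longrightarrow 0$, so the Ces\`{a}ro sequence of $(T(x_n))$ is norm convergent to $0$. (Weak compactness of $T$ is not even needed in this case.)

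The substantial case is when $E'$ has the positive Schur property, and here I would argue by contradiction. Let $(x_n)$ be a disjoint weakly null sequence of $E$. By Wnuk \cite[Remark 1]{Wnuk1}, $|x_n|\overset{w}{\longrightarrow}0$, and since every functional on $E$ splits into its positive and negative parts it follows from $0\leq x_n^{+}\leq|x_n|$ (resp. $0\leq x_n^{-}\leq|x_n|$) that $(x_n^{+})$ and $(x_n^{-})$ are disjoint weakly null sequences of $E^{+}$; hence it suffices to show $\|T(u_n)\|\longrightarrow 0$ for an arbitrary disjoint weakly null $(u_n)\subset E^{+}$. Assume not; after passing to a subsequence we may take $\|T(u_n)\|\geq\varepsilon>0$ and choose $y_n'\in B_{Y'}$ with $y_n'(T(u_n))\geq\varepsilon$. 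Putting $f_n:=T'(y_n')\in E'$ we get $f_n(u_n)\geq\varepsilon$ and $\|f_n\|\leq\|T\|$. As $T$ is weakly compact, Gantmacher's theorem (see \cite{AB1}) makes $T'$ weakly compact, so $(f_n)$ is relatively weakly compact in $E'$; by the Eberlein--Smulian theorem we may assume $f_n\overset{w}{\longrightarrow}f$. Since $u_n\overset{w}{\longrightarrow}0$ we have $f(u_n)\longrightarrow 0$, so $g_n:=f_n-f$ satisfies $g_n\overset{w}{\longrightarrow}0$ in $E'$ and $g_n(u_n)\geq\varepsilon/2$ for all large $n$.

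The remaining step, and the main obstacle, is to convert the estimate $g_n(u_n)\geq\varepsilon/2$, where $(u_n)\subset E^{+}$ is disjoint and $(g_n)$ is weakly null and relatively weakly compact in $E'$, into a disjoint, positive, weakly null sequence $(w_k)\subset(E')^{+}$ with $\inf_k\|w_k\|>0$; such a sequence contradicts the positive Schur property of $E'$ and forces $\|T(u_n)\|\longrightarrow 0$. The difficulty is that moduli and positive parts of weakly null sequences in $E'$ need not remain weakly null, so one cannot simply pass to $(|g_n|)$. I would overcome this by a gliding-hump construction that exploits the disjointness of the $(u_n)$ together with the relative weak compactness of $(g_n)$ to peel off disjoint positive pieces $w_k\leq|g_{n_k}|$ with $w_k(u_{n_k})\geq\varepsilon/4$; more expediently, one may invoke the known fact that a weakly compact operator from a Banach lattice whose dual has the positive Schur property is almost Dunford-Pettis, i.e. maps disjoint weakly null sequences to norm null sequences. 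In either case the contradiction $\|T(u_n)\|\longrightarrow 0$ is reached, and $T$ is disjoint weak Banach-Saks.
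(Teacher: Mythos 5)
Your first case ($Y$ has the Schur property) is correct and in fact cleaner than the paper's own argument: since $T$ is weak--weak continuous, $T(x_n)\overset{w}{\longrightarrow}0$ already forces $\|T(x_n)\|\longrightarrow 0$ by the Schur property, so the Ces\`{a}ro means converge and, as you observe, weak compactness is not needed here. (The paper instead extracts a weakly convergent subsequence of $(T(x_n))$ via weak compactness and then applies Schur; both are fine.)

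The case where $E'$ has the positive Schur property, however, is where the real work lies, and your write-up does not actually do it. The reduction to positive disjoint sequences, the choice of $y_n'$, the passage to $f_n=T'(y_n')$ via Gantmacher, and the normalization $g_n=f_n-f$ are all sound; but the step you yourself flag as ``the main obstacle'' --- producing a positive \emph{disjoint} weakly null sequence $(w_k)$ in $E'$ with $\|w_k\|$ bounded below from the data $g_n(u_n)\geq\varepsilon/2$ --- is precisely the content of the theorem in this case, and you only assert that a gliding-hump construction ``would'' deliver it. As written, neither the construction of the $w_k$ (one needs a disjointification lemma placing the $w_k$ in the solid hull of the relatively weakly compact set $T'(B_{Y'})$, so that they are weakly null by \cite[Theorem~4.34]{AB1}; see the lemmas surrounding \cite[Theorem~4.36]{AB1}) nor the ``known fact'' you fall back on is proved or referenced. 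The paper closes exactly this gap by citing Proposition~3.18 of \cite{HHS}, which states that a weakly compact operator whose domain has a dual with the positive Schur property is almost Banach--Saks, hence disjoint weak Banach--Saks. So either carry out the disjointification explicitly or cite that result; in its current form the second half of your proof is a plan rather than a proof.
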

\begin{proof}
\begin{itemize}
\item  Let $T: E\longrightarrow Y$ be a weakly compact operator. If $E^{\prime}$  has the positive Schur property, then it follows from  Proposition 3.18 \cite{HHS} that $T$ is  almost Banach-Saks and hence $T$ is  disjoint weak Banach-Saks.
\item Let $(x_n)$ be a weakly disjoint sequence in $E$ and $T: E\longrightarrow Y$ be a weakly compact operator. It follows from \cite{AB1} that  $T(x_n)$ has a weakly convergent subsequence in $Y$. Since $Y$ has the Schur property, then $(T(x_n))$ has a norm convergent subsequence.
\end{itemize}
\end{proof}

\end{document}